\newcommand{\U}{{\mathcal U}}
\newcommand{\0}{{\mathbf 0}}
\newcommand{\C}{{\mathbb C}}
\newcommand{\Z}{{\mathbb Z}}
\newcommand{\N}{{\mathbb N}}
\newcommand{\cL}{{\mathbb L}}
\newcommand{\D}{{\mathbb D}}
\newcommand{\strat}{{\mathfrak S}}
\newcommand{\hyp}{{\mathbb H}}
\newcommand{\supp}{\operatorname{supp}}
\newcommand{\arrow}[1]{\stackrel{#1}{\longrightarrow}}
\newcommand{\Adot}{\mathbf A^\bullet}
\newcommand{\Pdot}{\mathbf P^\bullet}
\newcommand{\vd}{{{\cal D}}}
\newtheorem{defn0}{Definition}[section]
\newtheorem{prop0}[defn0]{Proposition}
\newtheorem{conj0}[defn0]{Conjecture}
\newtheorem{thm0}[defn0]{Theorem}
\newtheorem{lem0}[defn0]{Lemma}
\newtheorem{corollary0}[defn0]{Corollary}
\newtheorem{example0}[defn0]{Example}
\newtheorem{remark0}[defn0]{Remark}
\newtheorem{question0}[defn0]{Question}
\newenvironment{defn}{\begin{defn0}\hskip -.06in .}{\end{defn0}}
\newenvironment{prop}{\begin{prop0}\hskip -.06in .}{\end{prop0}}
\newenvironment{thm}{\begin{thm0}\hskip -.06in .}{\end{thm0}}
\newenvironment{cor}{\begin{corollary0}\hskip -.06in .}{\end{corollary0}}
\newenvironment{exm}{\begin{example0}\hskip -.06in .\rm}{\end{example0}}
\newenvironment{rem}{\begin{remark0}\hskip -.06in .\rm}{\end{remark0}}
\newcommand{\thmref}[1]{Theorem~\ref{#1}}
\newcommand{\corref}[1]{Corollary~\ref{#1}}
\newcommand{\secref}[1]{Section~\ref{#1}}
\newcommand{\mbf}[1]{{\mathbf #1}}
\newcommand{\qed}{\mbox{$\Box$}}
\newenvironment{proof}{\noindent {\bf Proof.}}{\qed\vskip 6pt}
\title{Milnor fibers and Links of Local Complete Intersections}
\author{David B. Massey}
\date{}
\begin{document}

\baselineskip = 14pt

\maketitle

\thispagestyle{fancy}

\lfoot{AMS subject classifications: 32B15, 32C35, 32C18, 32B10.
\newline   Keywords: Milnor fiber, real link, complex link, local complete intersection}
\cfoot{}
\rfoot{}
\renewcommand{\footrulewidth}{0.4pt}

\begin{abstract} We discuss and prove a number of cohomological results for Milnor fibers, real links, and complex links of local complete intersections with singularities of arbitrary dimension.
\end{abstract}

\sloppy




\section{Introduction} 

Suppose that $X$ is a purely $(n+1)$-dimensional local complete intersection. For convenience, we will assume that $X$ is locally embedded in an open subset $\U$ of $\C^N$, and that $\0\in X$. We will assume that $\0$ is a singular point of $X$. Suppose that we have a complex analytic function $\hat f:(\U, \0)\rightarrow (\C, 0)$, and let $f$ denote the restriction of $\hat f$ to $X$. 

We are interested in what one can say about standard topological objects -- like real links, complex links, and Milnor fibers (see \secref{sec:classic} for definitions) - in the case where the dimension of the singular set of $X$, $\Sigma X$, and the dimension of the critical locus of $f$, $\Sigma f$, are arbitrary. Of course, part of the issue is that we must define what notion of ``critical locus'' we are using for a function on a space with possibly non-isolated singularities.

We let $d:=\dim_\0\Sigma X$. If $d=0$, we define $d_f:=0$; otherwise, we define $d_f:=\dim_\0\overline{\Sigma X\backslash V(f)}$. We are interested in points which are ``cohomological critical points of $f$'', that is, points where the reduced cohomology of the Milnor fiber is not zero. Therefore, we let $F_{f-f(\mbf x),\mbf x}$ denote the compact Milnor fiber of $f-f(\mbf x)$ at $\mbf x$, and define:

\begin{defn} The {\bf $\Z$-critical locus of $f$}, $\Sigma _{\Z}f$, is the set
$$
\left\{\mbf x\in X \ | \ \widetilde H^*\big(F_{f-f(\mbf x), \mbf x}; \Z\big)\neq 0\right\}.
$$
\end{defn}
It is well-known that, given any Whitney stratification of $X$, the $\Z$-critical locus is contained in the stratified critical locus of $f$. In addition, stratified critical values are local isolated. Consequently, by taking a smaller neighborhood of the origin, if necessary, we may assume that $\Sigma _{\Z}f\subseteq V(f)$. We let $s_f:=\dim_\0 \Sigma _{\Z}f$.

\smallskip

Milnor fibers and the real link of a hypersurface singularity have been fundamental objects of study since Milnor's book \cite{milnorsing} on the subject appeared in 1968. For complete intersections with isolated singular points, the Milnor fiber and the real link were investigated by Looijenga in his book \cite{looibook} from 1984.  The complex link was studied in depth by L\^e and Kato in 1975 in \cite{lk}, and for complete intersections by L\^e in 1979 in \cite{levan}. The complex link, and its relationship to the real link, was discussed at length in Goresky and MacPherson's 1988 book ``Stratified Morse Theory'' \cite{stratmorse}.

\smallskip

There are several known results in the general situation that we are in. Note that we always use $\Z$ coefficients in this paper when dealing with ordinary cohomology.

\begin{enumerate}

\item[$\bullet$] In \cite{hamm}, Hamm generalizes a result of Milnor for hypersurfaces and proves that the real link of $X$ at $\mbf 0$, $K_{X, \0}$, is $(n-1)$-connected, and so, in particular, that
$\widetilde H^k(K_{X, \0})=0$ for $k\leq n-1$.

\item[$\bullet$] In \cite{levan}, L\^e proves that the complex link of $X$ at $\mbf 0$, $\cL_{X, \mbf 0}$, has the homotopy-type of a finite bouquet of $n$-spheres and so, in particular, that
$\widetilde H^k(\cL_{X, \0})=0$ for $k\neq n$.

\item[$\bullet$] L\^e's result on the complex link easily extends to the links of all strata (see \secref{sec:classic}) of any Whitney stratification of $X$. This implies that the shifted constant sheaf $\Z^\bullet_X[n+1]$ is a {\it perverse sheaf}. Then, the general results that the shifted nearby and vanishing cycles of a perverse sheaf are perverse imply that $\widetilde H^k(F_{f,\0})=0$ unless $n-s_f\leq k\leq n$, and $H^k(F_{f,\0}, \partial F_{f,\0})=0$ unless $n\leq k\leq 2n$.

\item[$\bullet$] In Chapter 6 of \cite{schurbook}, Sch\"urmann investigates similar questions by similar techniques, but does not obtain most of our general results.

\end{enumerate}

Notice how none of the results above detect the singular set of $X$ itself; they are independent of the values of $d$ and $d_f$. We want to look at cohomological results that actually notice that $X$ may be very singular. 

\smallskip

In this paper, we prove:

\begin{enumerate}
\item $H^k(F_{f,\0}, \partial F_{f,\0})=0$ unless $k=2n$ or $n\leq k\leq n+\operatorname{max}\{s_f, d_f\}$.

\item $H^k(\cL_{X,\0}, \partial \cL_{X,\0})=0$ unless $k=2n$ or $n\leq k\leq n+d$.

\item There is a (dual) Wang exact sequence

$
\cdots\rightarrow H^{k-1}(F_{f,\0}, \partial F_{f,\0})\rightarrow H^k(K_{X,\0}, K_{V(f),\0})\rightarrow  H^{k}(F_{f,\0}, \partial F_{f,\0})\xrightarrow{\operatorname{id}-T_{f, \0}} \hfill
$

$
\hfill H^{k}(F_{f,\0}, \partial F_{f,\0})  \rightarrow H^{k+1}(K_{X,\0}, K_{V(f),\0})\rightarrow\cdots$,

\noindent where $T_{f,\0}$ is the monodromy automorphism on the Milnor fiber with boundary. In particular, $H^k(K_{X,\0}, K_{V(f),\0})=0$ unless $k=2n, 2n+1$ or $n\leq k\leq n+\operatorname{max}\{s_f, d_f\}+1$.
\item If $H$ is a generic hyperplane in the ambient space $\U$, then $H^k(K_{X,\0}, K_{X\cap H,\0})=0$ unless $k=2n, 2n+1$ or $n\leq k\leq n+d+1$.
\item Suppose that $n\geq 1$ and $s_f=0$. Then, there is well-defined variation map 
$$\operatorname{var}:H^n(F_{f,\0})\rightarrow H^n(F_{f,\0}, \partial F_{f,\0})
$$
and an exact sequence
$$
0\rightarrow H^n(K_{X,\0})\rightarrow H^n(F_{f,\0})\arrow{\operatorname{var}} H^n(F_{f,\0}, \partial F_{f,\0})\rightarrow H^{n+1}(K_{X,\0})\rightarrow 0.
$$

In particular, $H^n(K_{X,\0})$ injects into $H^n(F_{f,\0})$, and $\operatorname{var}$
  is an isomorphism if and only if $H^n(K_{X,\0})= H^{n+1}(K_{X,\0})=0$. Furthermore, for all $k\geq n+1$, $H^k(F_{f,\0}, \partial F_{f,\0})\cong H^{k+1}(K_{X,\0})$.
  
  \item Suppose that $n\geq 1$ and $s_f=0$. Then, for all $k\geq n+1$, $H^k(F_{f,\0}, \partial F_{f,\0})\cong H^{k+1}(K_{X,\0})$, and so is independent of $f$. In particular, for all $k\geq n+1$,
  $$H^k(F_{f,\0}, \partial F_{f,\0}) \ \cong  \ H^{k+1}(K_{X,\0})\ \cong \ H^k(\cL_{X,\0}, \partial \cL_{X,\0}).$$

  \item $H^n(K_{X,\0})$ injects into $H^n(\cL_{X,\0})$.
 \item $H^{k}(K_{X,\0})=0$, unless $k=0$, $k=2n+1$, or $n\leq k\leq n+d+1$.
\end{enumerate}

Most of the results above are known in the case where $d=0$, i.e., where $X$ is an isolated complete intersection singularity; in that case, the Milnor fibers, real links, and complex links are all smooth and one can use Poincar\'e-Lefschetz duality. However, when $d\geq 1$, the Milnor fiber, real link and complex link are all singular, and one cannot use Poincar\'e-Lefschetz duality. However, Verdier duality is still at our disposal, and leads to all of these new general results.

\section{Standard definitions and a classical result}\label{sec:classic}

We must discuss some basic background on Whitney stratifications, Milnor fibrations, real and complex links, the derived category of complexes of sheaves of $\Z$-modules, and perverse sheaves. General references on stratifications, links, and Milnor fibrations are \cite{stratmorse},  \cite{milnorsing}, \cite{dimcasing}, and \cite{relmono}. 

\smallskip

Suppose that $\U$ is an open subset of $\C^N$, that $X$ is a complex analytic subset of $\U$, and that $\mbf p\in X$. Suppose that $f:(X, \mbf p)\rightarrow (\C, 0)$ is a complex analytic function. 

\medskip

Then, the following definitions are standard:

\begin{defn}

Let $B_\epsilon(\mbf p)$ be a sufficiently small closed ball of radius $\epsilon>0$, with boundary $S_\epsilon(\mbf p)$. Let $L:\C^N\rightarrow\C$ be a generic affine linear form with $L(\mbf p)=0$. Finally, let $\eta\in \C$ be such that $0<|\eta|\ll \epsilon$.

Then, the homeomorphism-types of the following spaces are independent of the choices of $\epsilon$, $\eta$, and $L$:

\begin{enumerate}

\item the {\bf real link} of $X$ at $\mbf p$ is $K_{X,\mbf p}:=S_\epsilon(\mbf p)\cap X$; 

\item the (compact) {\bf Milnor fiber of $f$ at $\mbf p$} is $F_{f,\mbf p}:=B_\epsilon(\mbf p)\cap f^{-1}(\eta)$;

\item the {\bf boundary of the the Milnor fiber of $f$ at $\mbf p$} is $\partial F_{f,\mbf p}:=S_\epsilon(\mbf p)\cap f^{-1}(\eta)$;

\item the (compact) {\bf complex link of $X$ at $\mbf p$} is $\cL_{X, \mbf p}:=B_\epsilon(\mbf p)\cap X\cap  L^{-1}(\eta)=F_{L_{|_X}, \mbf p}$; and 

\item the {\bf boundary of the complex link of $X$ at $\mbf p$} is $\partial \cL_{X, \mbf p}:=S_\epsilon(\mbf p)\cap X\cap L^{-1}(\eta)=\partial F_{L_{|_X}, \mbf p}$. 

\end{enumerate}

If we replace the closed ball with the open ball, the homotopy-types of the resulting non-compact Milnor fiber and complex link are the same as those of the compact ones.

Furthermore, if we let $H:=V(L)$, then $\partial \cL_{X, \mbf p}$ is homeomorphic to $S_\epsilon(\mbf p)\cap X \cap H = K_{X\cap H, \mbf p}$.
\end{defn}

\medskip

We also need to define complex links of Whitney strata. Throughout this paper, our Whitney strata are always assumed to be connected.

\begin{defn} Suppose that $\strat$ is a complex analytic Whitney stratification of $X$. Let $S\in\strat$, let $\mbf p\in S$, and let $M$ be a complex submanifold of $\U$ of complementary dimension (i.e., of dimension $N-\dim S$), which transversely intersects $S$ at $\mbf p$.

Then, $M$ is a {\bf normal slice} to $S$ at $\mbf p$ and, for sufficiently small $\epsilon>0$, the homeomorphism-type of the pair 
$$(B_\epsilon(\mbf p)\cap X\cap M, \ \cL_{X\cap M, \mbf p})$$ 
is independent of the choice of $M$ and $\mbf p$. 

This homeomorphism-type is the {\bf normal Morse data of $S$ in $X$} and the homeomorphism-type of $\cL_{X\cap M, \mbf p}$  is the {\bf complex link of $S$ in $X$}. We denote this pair by $(\N_{X,S}, \cL_{X, S})$.
\end{defn}

We need the fundamental result of L\^e from \cite{levan} on local complete intersections.

\begin{thm}\label{lelink} \textnormal{(L\^e, 1979)} Suppose that $X$ is a local complete intersection of pure dimension $(n+1)$ and that $\strat$ is a Whitney stratification of $X$. Let $S\in\strat$. 

Then, $\cL_{X, S}$ has the homotopy-type of a finite bouquet of spheres of dimension $n-\dim S$. In particular, $\widetilde H^k(\cL_{X, S};\Z)$ is zero if $k\neq n-\dim S$ and 
$$
\widetilde H^{n-\dim S}(\cL_{X, S};\Z) \ \cong \ \Z^m,
$$
for some finite value of $m$ (possibly $0$).
\end{thm}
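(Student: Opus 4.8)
The plan is to reduce to the case of a point-stratum, isolate the two topological facts that carry the argument, and then conclude by classical obstruction theory.

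\emph{Step 1: reduction to $S=\{\0\}$.} Fix $\p\in S$ and take the normal slice $M$ to be an affine-linear subspace of $\U$ of codimension $s:=\dim S$ that is transverse at $\p$ to every stratum of $\strat$ meeting a small ball about $\p$. Then $(X\cap M,\p)$ is a complex analytic germ of pure dimension $n'+1$ with $n':=n-s$; and since $X$ is locally a complete intersection of codimension $N-(n+1)$ while $M$ is cut out by $s$ affine-linear forms, $X\cap M$ has pure codimension $N-(n+1)+s$ near $\p$ and so is again a local complete intersection. By Whitney $(a)$-regularity, $\{\,T\cap M: T\in\strat\,\}$ is, near $\p$, a Whitney stratification of $X\cap M$ in which $\{\p\}$ is a $0$-dimensional stratum. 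Since $\cL_{X,S}=\cL_{X\cap M,\p}$ by definition, it suffices to treat the case in which $S=\{\0\}$ is a point-stratum, the asserted sphere dimension then being $n'=n-\dim S$.

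\emph{Step 2: the two topological inputs.} Assume $\0$ is a point-stratum, let $L$ be a generic affine-linear form with $L(\0)=0$, and set $\cL:=\cL_{X,\0}=B_\epsilon\cap X\cap L^{-1}(\eta)=F_{L|_X,\0}$. First, \emph{$\cL$ has the homotopy-type of a finite CW-complex of real dimension $\le n$}: this is shared by the complex link of any $(n+1)$-dimensional complex analytic germ and follows from Morse theory on the contractible space $B_\epsilon\cap X\cap L^{-1}(\D_\delta)$, whose stratified critical points for a generic real Morse function all have index $\le n$ (see \cite{stratmorse}). Second --- and this is where the complete-intersection hypothesis is indispensable --- \emph{$\cL$ is $(n-1)$-connected}. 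I would get this by induction on $n$ (trivial for $n=0$): a second generic affine-linear form $L_2$ makes $L_2|_\cL$ a stratified, boundary-compatible locally trivial fibration away from finitely many exceptional values, and $\cL$ is reassembled from a regular fibre $G$ by attaching, over each exceptional value, relative cells described by the normal Morse data --- inside the $n$-dimensional complete intersection $X\cap L^{-1}(\eta)$ --- of the strata on which $L_2$ is stratified-critical; by stratified Morse theory this data splits as a tangential pair of cells times the normal Morse data of the relevant stratum of $X\cap L^{-1}(\eta)$, whose reduced cohomology the inductive hypothesis forces into degrees $\ge n-1$, so $G$ is $(n-2)$-connected and all attached cells sit in degrees $\ge n$; a Mayer--Vietoris and van Kampen analysis, arranged so that the attached $n$-cells cancel the bottom homology of $G$ (with the usual small adjustments when $n\le 2$), then yields that $\cL$ is $(n-1)$-connected. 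Equivalently, this second input is the statement that a local complete intersection has full rectified homotopical depth, and in either guise the base of the induction is Hamm's bouquet theorem for isolated complete intersection singularities (see \cite{looibook}).

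\emph{Step 3: conclusion.} Granting the two inputs, $\cL$ is an $(n-1)$-connected finite CW-complex of real dimension $\le n$. By Hurewicz, $\widetilde H_k(\cL;\Z)=0$ for $k<n$; by dimension, $\widetilde H_k(\cL;\Z)=0$ for $k>n$; and $H_n(\cL;\Z)$, being the top homology of an $n$-dimensional complex, is free of some finite rank $m$. If $n\ge 2$ then $\cL$ is simply connected, so $\pi_n(\cL)\cong H_n(\cL;\Z)\cong\Z^m$, and a map $\bigvee^{m}S^n\to\cL$ realizing a basis of $\pi_n$ is an integral-homology isomorphism of simply connected CW-complexes, hence a homotopy equivalence by Whitehead's theorem. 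If $n=1$, a connected finite $1$-complex is homotopy equivalent to a wedge of circles, necessarily $\bigvee^{m}S^1$; if $n=0$, $\cL$ is a finite set of points, i.e.\ a bouquet of $0$-spheres. Thus $\cL\simeq\bigvee^{m}S^n$, and $\widetilde H^k(\cL;\Z)=0$ for $k\ne n$ while $\widetilde H^{n}(\cL;\Z)\cong\Z^m$ by universal coefficients. I expect the genuine obstacle to be the $(n-1)$-connectedness in Step~2: it is the one place the complete-intersection hypothesis is used, and the delicate point in the induction is to guarantee that every relative cell attached at an exceptional value of $L_2$ has dimension at least $n$ and that these cells kill the residual bottom homology of the generic fibre.
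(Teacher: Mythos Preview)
The paper does not prove this theorem; it is quoted as L\^e's 1979 result and cited to \cite{levan}, so there is no in-paper proof to compare against. What follows is an assessment of your sketch on its own terms.

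Your architecture is correct. Step~1 (reduction to a point-stratum by a transverse slice) and Step~3 (Hurewicz/Whitehead from $(n-1)$-connected plus CW of dimension $\le n$) are standard and clean. In Step~2, the first input is fine and holds for the complex link of any analytic germ.

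The gap is exactly where you flag it: in your inductive sketch for $(n-1)$-connectedness. Your stratified-Morse argument correctly shows that $\cL$ is obtained from the regular fibre $G$ (by induction a bouquet of $(n-1)$-spheres) by attaching cells whose relative cohomology sits in degree $n$; this gives $H_k(\cL)=0$ for $1\le k\le n-2$ and simple connectivity for $n\ge 2$. But it does \emph{not} give $H_{n-1}(\cL)=0$. In the long exact sequence of the pair $(\cL,G)$ one only gets that $H_{n-1}(\cL)$ is a quotient of $H_{n-1}(G)$, and nothing in the Morse-theoretic bookkeeping forces the boundary map $H_n(\cL,G)\to H_{n-1}(G)$ to be surjective. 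The phrase ``arranged so that the attached $n$-cells cancel the bottom homology of $G$'' is precisely the unproved assertion.

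The standard way to close this is \emph{not} by tracking attaching maps but by proving the pair statement directly: for an LCI $X$ of pure dimension $n+1$, the pair $(B_\epsilon\cap X,\ \cL_{X,\0})$ is $n$-connected. Since $B_\epsilon\cap X$ is contractible (conic structure), this immediately yields that $\cL_{X,\0}$ is $(n-1)$-connected. The $n$-connectivity of the pair is the local Lefschetz theorem for complete intersections due to Hamm and Hamm--L\^e, and is what you correctly identify as ``full rectified homotopical depth''. Its proof proceeds by induction on the number of defining equations: write $X=Y\cap V(g)$ with $Y$ an LCI of dimension $n+2$, deform $g$ to a nearby regular value, and use that the inclusion of the deformed slice into $B_\epsilon\cap Y$ is $(n+1)$-connected (inductive hypothesis on $Y$) together with the connectivity coming from the Milnor fibration of $g$ on the slice. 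The base case is the smooth case (or Hamm's ICIS theorem). Once you invoke that pair-connectivity result, your Steps~1 and~3 finish the job; your own inductive mechanism via a second linear form $L_2$ is then superfluous.
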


\section{Background on the derived category} 

We continue as before with $\U$ being an open subset of $\C^N$, and $X$ being a complex analytic subset of $\U$. We now want to see what we get by using the machinery of the derived category, perverse sheaves, nearby cycles, and vanishing cycles. As general references for the derived category and perverse sheaves, we recommend \cite{kashsch} and \cite{dimcasheaves}.

\medskip

As we are interested in results on integral cohomology, throughout this paper, our complexes of sheaves $\Adot$ will be bounded, constructible sheaves of $\Z$-modules on $X$; we write $\Adot\in D^b_c(X)$.

\medskip

\noindent\rule{1in}{1pt}

\noindent {\bf Morse modules and perverse sheaves}

\medskip

We first need a cohomological version of a normal Morse data to strata with coefficients in a complex of sheaves $\Adot$.

\begin{defn}
Suppose that $\Adot$ is a bounded complex of sheaves of $\Z$-modules, which is constructible with respect to a Whitney stratification $\strat$. Let $S\in\strat$.

Then, using our notation from the previous section, the isomorphism-type of the hypercohomology
$$
m_S^k(\Adot) \ := \ \hyp^{k-\dim S}\big(\N_{X,S}, \cL_{X, S}; \ \Adot\big)
$$
is well-defined (i.e., is independent of the choices made in defining the normal Morse data). We call this the {\bf degree $k$ Morse module of $S$ with respect to $\Adot$}. The Morse modules are necessarily finitely-generated $\Z$-modules.

In particular, 
$$
m_S^k\left(\Z_X^\bullet[n+1]\right) \ = \ \hyp^{k-\dim S}\big(\N_{X,S}, \cL_{X, S}; \ \Z_X^\bullet[n+1]\big) \ \cong$$
$$
H^{n+1+k-\dim S}\big(\N_{X,S}, \cL_{X, S}; \ \Z\big) \ \cong \ \widetilde H^{n+k-\dim S}\big(\cL_{X, S}; \ \Z\big)
$$
\end{defn}

\bigskip

We do not wish to define perverse sheaves here, and we are not going to list all of the well-known results on perverse sheaves that we use in this section. However, we do want to state a fundamental result gives a characterization of perverse sheaves, a result which may not be so well-known.

\begin{thm} \textnormal{(\cite{kashsch}, Theorem 10.3.2)}\label{pureperverse} A bounded constructible complex of sheaves $\Pdot$ is perverse if and only if all of the Morse modules of $\Pdot$ are concentrated in degree zero, i.e., are zero in all non-zero degrees.

This is independent of the Whitney stratification, with respect to which $\Pdot$ is constructible, that is used in defining the Morse modules.
\end{thm}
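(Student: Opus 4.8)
This theorem is quoted from \cite{kashsch}; here I only indicate the structure of a proof. For a stratum $S\in\strat$ write $d_S:=\dim_\C S$ and $i_S\colon S\hookrightarrow X$; recall that, for the middle perversity, $\Pdot$ is perverse if and only if for every $S\in\strat$ one has $\mathcal{H}^j(i_S^*\Pdot)=0$ for $j>-d_S$ and $\mathcal{H}^j(i_S^!\Pdot)=0$ for $j<-d_S$. The plan is to identify each Morse module, up to shift, with a vanishing-cycle stalk attached to a transverse slice, and then to pit these support and cosupport conditions against the perversity-preservation properties of the vanishing-cycle functor and of transverse restriction.

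\emph{The key identity.} Fix $S$, a point $\mbf p\in S$, and a normal slice $M$ to $S$ at $\mbf p$; for the generic choice used to define the normal Morse data, $M$ is transverse to every stratum in a neighborhood of $\mbf p$, so $\Pdot$ is non-characteristic for the embedding $M\hookrightarrow\U$. Put $Y:=X\cap M$ with its induced stratification, so that $\{\mbf p\}$ is its unique $0$-dimensional stratum, and let $L$ be a generic linear form with $L(\mbf p)=0$. Since $\N_{X,S}=Y\cap B_\epsilon(\mbf p)$ is contractible onto $\mbf p$ and $\cL_{X,S}=F_{L_{|_Y},\mbf p}$, the long exact sequence of the pair $(\N_{X,S},\cL_{X,S})$ is, up to a shift, the long exact sequence coming from Deligne's vanishing-cycle triangle $i_{\mbf p}^*\Pdot|_Y\to\psi_L\Pdot|_Y\to\phi_L\Pdot|_Y$ over $\mbf p$. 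Writing $i_M\colon Y\hookrightarrow X$, and writing ${}^p\phi_L:=\phi_L[-1]$ and $i_M^*(-)[-d_S]$ for the perverse-exact normalizations of the vanishing-cycle and transverse-restriction functors, a bookkeeping of shifts gives
$$
m_S^k(\Pdot)\ =\ \hyp^{k-d_S}\big(\N_{X,S},\cL_{X,S};\Pdot\big)\ \cong\ \mathcal{H}^k\Big({}^p\phi_L\big(i_M^*\Pdot[-d_S]\big)\Big)_{\mbf p}.
$$
(One can calibrate the shift against the constant sheaf $\Z_X^\bullet[n+1]$ on a local complete intersection, where \thmref{lelink} makes both sides equal to $\widetilde H^{n+k-d_S}(\cL_{X,S})$, which is nonzero only for $k=0$.) Microlocally, the right-hand side is the microstalk of $\Pdot$ along a generic conormal covector to $S$ at $\mbf p$, which in particular makes its independence of all choices manifest.

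\emph{The two implications.} If $\Pdot$ is perverse, then $i_M^*\Pdot[-d_S]$ is perverse on $Y$ (non-characteristic inverse image), ${}^p\phi_L$ of it is perverse on $Y$ (Gabber), and it is supported on $\{\mbf p\}$ because $L$ is generic (this is exactly what makes $\cL_{X,S}$ well-defined); a perverse sheaf supported at a point has stalk concentrated in degree $0$, so the identity above gives $m_S^k(\Pdot)=0$ for $k\neq0$. For the converse one induct on $\dim_\C X$. On the open stratum $S_0$ one has $\cL_{X,S_0}=\emptyset$ and $\N_{X,S_0}=\{\mbf p\}$, so $m_{S_0}^k(\Pdot)=\mathcal{H}^{k-\dim_\C X}(i_{\mbf p}^*\Pdot)$; degree-$0$ concentration forces $i_{S_0}^*\Pdot\cong i_{S_0}^!\Pdot$ into the single degree $-d_{S_0}$, giving the two conditions at $S_0$. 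For a stratum $S$ with $d_S\geq1$, choose $M$ and $\mbf p$ as above and choose the normal slices for the strata of $Y$ compatibly with those for $X$ (standard in stratified Morse theory); then the normal Morse data of $S'\cap M$ in $Y$ coincides with that of $S'$ in $X$ for all $S'$ with $S\subseteq\overline{S'}$, and that of $\{\mbf p\}$ in $Y$ coincides, after the shift $[-d_S]$, with that of $S$ in $X$, so all Morse modules of $i_M^*\Pdot[-d_S]$ on $Y$ are again concentrated in degree $0$. Since $\dim_\C Y<\dim_\C X$, the inductive hypothesis makes $i_M^*\Pdot[-d_S]$ perverse on $Y$; its stalk and costalk at $\mbf p$ are, via the non-characteristic relation $i_M^!\Pdot\cong i_M^*\Pdot[-2d_S]$, identified with $(i_S^*\Pdot)_{\mbf p}[-d_S]$ and $(i_S^!\Pdot)_{\mbf p}[-d_S]$, so perversity at the point-stratum of $Y$ yields $\mathcal{H}^j(i_S^*\Pdot)=0$ for $j>-d_S$ and $\mathcal{H}^j(i_S^!\Pdot)=0$ for $j<-d_S$ at $\mbf p$, hence along all of $S$ by constructibility. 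The $0$-dimensional strata, where transverse slicing produces no reduction, require a separate treatment: there the Morse-module hypothesis says precisely that ${}^p\phi_L\Pdot$ is a perverse skyscraper at $\mbf p$, which one feeds into the vanishing-cycle triangle together with the conditions already established at the positive-dimensional strata. Granting this, $\Pdot$ is perverse. Independence of the stratification is then automatic: the displayed equivalence holds for every Whitney stratification making $\Pdot$ constructible, and its right-hand side --- ``$\Pdot$ is perverse'' --- refers to none of them.

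\emph{Where the work is.} The forward implication is soft. The content sits in the converse, and within it two points require genuine care. First, the compatibility of normal Morse data with transverse slicing, needed so that the induction closes. Second --- and this is the heart --- the identification of the costalk $(i_S^!\Pdot)_{\mbf p}$ with the costalk of the transverse slice, i.e.\ the assertion that the normal Morse data computes $i_S^!\Pdot$ and not merely $i_S^*\Pdot$; this is the one place where transversality of $M$, as opposed to the mere drop in dimension, is indispensable (it is what lets one invoke $i_M^!\Pdot\cong i_M^*\Pdot[-2d_S]$). Handling the $0$-dimensional strata cleanly is also delicate, and is the reason \cite{kashsch} organizes the whole argument as a microlocal d\'evissage rather than the stratum-by-stratum induction sketched above. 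A variant of the converse replaces the costalk discussion by the compatibility of normal Morse data and of perversity with Verdier duality, which interchanges the support and cosupport halves of the equivalence, at the cost of checking those compatibilities with $\Z$ coefficients.
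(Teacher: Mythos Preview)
The paper does not give a proof of this theorem; it is stated with attribution to \cite{kashsch}, Theorem 10.3.2, and invoked as a black box. There is therefore nothing in the paper against which to compare your proposal.

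That said, your sketch is a reasonable outline of the argument and is correct in its broad strokes. The forward direction is indeed soft once one grants that non-characteristic restriction (shifted) and $\phi[-1]$ preserve perversity. For the converse, your inductive scheme via transverse slicing is sound, and you have correctly flagged the two genuine sticking points: the compatibility of normal Morse data under slicing, and the costalk identification $i_M^!\Pdot\cong i_M^*\Pdot[-2d_S]$. You also rightly note that the $0$-dimensional strata resist this induction, which is why \cite{kashsch} organizes the proof microlocally rather than stratum by stratum. If you want a self-contained write-up along the lines you sketch, the cleanest route through the base case is probably the Verdier-duality variant you mention at the end.
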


In light of \thmref{pureperverse}, the result of L\^e in \thmref{lelink} tells us:

\begin{thm} Suppose that $X$ is a local complete intersection of pure dimension $(n+1)$. Then, $\Z^\bullet_X[n+1]$ is a perverse sheaf.
\end{thm}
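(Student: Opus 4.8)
The plan is to deduce this directly from the Morse-theoretic characterization of perverse sheaves (\thmref{pureperverse}) together with L\^e's theorem (\thmref{lelink}). First I would fix an arbitrary Whitney stratification $\strat$ of $X$; since $X$ is a complex analytic set, such a stratification exists and the constant sheaf $\Z^\bullet_X$ (hence its shift $\Z^\bullet_X[n+1]$) is constructible with respect to it. By \thmref{pureperverse}, to show $\Z^\bullet_X[n+1]$ is perverse it suffices to check that for every stratum $S\in\strat$ and every integer $k$, the Morse module $m_S^k\big(\Z^\bullet_X[n+1]\big)$ vanishes for $k\neq 0$.

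The key computation is already recorded in the definition of the Morse modules: for any stratum $S$ of complex dimension $\dim S$,
$$
m_S^k\big(\Z^\bullet_X[n+1]\big) \ \cong \ \widetilde H^{\,n+k-\dim S}\big(\cL_{X,S};\Z\big).
$$
Now apply \thmref{lelink}: since $X$ is a pure-dimensional local complete intersection, $\cL_{X,S}$ has the homotopy type of a finite bouquet of spheres of dimension $n-\dim S$, so $\widetilde H^{\,j}(\cL_{X,S};\Z)=0$ unless $j = n-\dim S$. Setting $j = n+k-\dim S$, this forces $k=0$. Hence $m_S^k\big(\Z^\bullet_X[n+1]\big)=0$ for all $k\neq 0$ and all $S\in\strat$, so by \thmref{pureperverse} the complex $\Z^\bullet_X[n+1]$ is perverse. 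One should also note, as \thmref{pureperverse} guarantees, that perversity does not depend on the choice of $\strat$, so the conclusion is intrinsic to $X$.

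There is essentially no obstacle here: the entire content has been front-loaded into the two cited theorems, and the proof is just the observation that the index bookkeeping in the Morse-module formula lines up exactly with the single nonvanishing cohomological degree of a bouquet of $(n-\dim S)$-spheres. The only point requiring a word of care is making sure the shift $[n+1]$ is the correct one, i.e.\ that the formula $m_S^k\big(\Z^\bullet_X[n+1]\big)\cong\widetilde H^{\,n+k-\dim S}(\cL_{X,S};\Z)$ is applied with the right degree conventions — but this is precisely the displayed identity in the definition of the Morse module, so no recomputation is needed.
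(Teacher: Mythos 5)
Your proof is correct and follows exactly the route the paper intends: the paper derives this theorem immediately from \thmref{pureperverse} and \thmref{lelink} via the displayed identity $m_S^k(\Z^\bullet_X[n+1])\cong\widetilde H^{\,n+k-\dim S}(\cL_{X,S};\Z)$, which is precisely your degree bookkeeping. No issues.
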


\medskip

\noindent\rule{1in}{1pt}

\noindent{\bf Stalk and costalk cohomology}

\medskip

We are interested in critical points of complex analytic functions $f:X\rightarrow\C$. Let $j_f$ denote the inclusion of $V(f)$ into $X$. For all $\mbf x\in X$, let $m_{\mbf x}$ denote the inclusion of $\{\mbf x\}$ into $X$ and, for all $\mbf x\in V(f)$, let ${\hat m}_{\mbf x}$ denote the inclusion of $\{\mbf x\}$ into $V(f)$, so that $m_{\mbf x}=j_f\circ {\hat m}_{\mbf x}$. For $\Adot\in\D^b_c(X)$, the {\it stalk cohomology} of $\Adot$ at $x$ is
$$
H^k(\Adot)_{\mbf x} \ = \ H^k(m^*_{\mbf x}\Adot) \ \cong \ \hyp^k(B_\epsilon(\mbf x); \ \Adot),
$$
for sufficiently small $\epsilon>0$ ; the {\it costalk cohomology} of $\Adot$ at $x$ is
$$
H^k(m^!_{\mbf x}\Adot) \ \cong \ \hyp^k(B_\epsilon(\mbf x), B_\epsilon(\mbf x)-\{\mbf x\}; \ \Adot) \ \cong \ \hyp^k(B_\epsilon(\mbf x), S_\epsilon(\mbf x); \ \Adot),
$$
for sufficiently small $\epsilon>0$.

If $\Pdot$ is a perverse sheaf on a space $X$, which is supported on an $r$-dimensional subset of $X$, and $\mbf x\in X$, then:
\begin{enumerate}
\item $H^k(m^*_{\mbf x}\Adot)=0$, unless $-r\leq k\leq 0$, and
\item $H^k(m^!_{\mbf x}\Adot)=0$, unless $0\leq k\leq r$.
\end{enumerate}

\medskip

\noindent\rule{1in}{1pt}

\noindent{\bf Nearby and vanishing cycles, the Milnor fiber, and the monodromy}

\medskip

We remind the reader that that the shifted nearby and vanishing cycle functors, $\psi_f[-1]$ and $\phi_f[-1]$, take perverse sheaves on $X$ to perverse sheaves on $V(f)$, and their stalk cohomology gives the cohomology and ``reduced'' cohomology of the Milnor of $f$. To be precise,
$$
H^k(\psi_f[-1]\Adot)_{\mbf x} \ \cong \ H^k({\hat m}_{\mbf x}\psi_f[-1]\Adot)\ \cong \ \hyp^{k-1}(F_{f, \mbf x};\ \Adot)
$$
and
$$
H^k(\phi_f[-1]\Adot)_{\mbf x} \ \cong \ H^k({\hat m}_{\mbf x}\phi_f[-1]\Adot) \ \cong \ \hyp^{k}(B_\epsilon(\mbf x)\cap X, F_{f, \mbf x};\ \Adot).
$$
Note that, if $\Adot$ is not the constant sheaf (or some shift of it), one must take care in thinking of the vanishing cycle cohomology as the ``reduced'' cohomology; it will not be as trivial as removing a copy of $\Z$ in one degree. Note, also, that we use the standard definition of the vanishing cycles; the one used in \cite{kashsch} is shifted by one from this.

The costalk cohomology of the nearby cycles gives the cohomology of the Milnor fiber modulo its boundary; that is, for all $\mbf x\in V(f)$, 
$$
H^k({\hat m}_{\mbf x}^!\psi_f[-1]\Adot)_{\mbf x} \ \cong \ \hyp^{k-1}(F_{f, \mbf x}, \partial F_{f, \mbf x};\ \Adot).
$$

There is a natural map between functors ${\hat m}_{\mbf x}^!\arrow{\tau_{\mbf x}}{\hat m}_{\mbf x}^*$. On cohomology, this yields the map induced by the inclusion of pairs $(F_{f, \mbf x}, \emptyset)\hookrightarrow (F_{f, \mbf x}, \partial F_{f, \mbf x})$:
$$
\hyp^{k-1}(F_{f, \mbf x}, \partial F_{f, \mbf x};\ \Adot) \ \rightarrow \ \hyp^{k-1}(F_{f, \mbf x};\ \Adot).
$$

There are monodromy automorphisms in the derived category, induced by letting the value of $f$ travel once, counterclockwise, around a small circle:
$$
T_f: \psi_f[-1]\Adot\rightarrow \psi_f[-1]\Adot\hskip .3in\textnormal{and}\hskip 0.3in \widetilde T_f: \phi_f[-1]\Adot\rightarrow \phi_f[-1]\Adot.
$$
On the stalk cohomology of the nearby cycles, ${\hat m}_{\mbf x}^*T_f$ induces the usual monodromy automorphism on $\hyp^*(F_{f, \mbf x};\ \Adot)$. On the costalk cohomology of the nearby cycles, ${\hat m}_{\mbf x}^!T_f$ induces the usual monodromy automorphism on $\hyp^*(F_{f, \mbf x},\partial F_{f,\mbf x};\ \Adot)$.

The following theorem is immediate from \cite{singenrich}, Theorems 3.3 and  3.5.

\begin{thm} If the Morse modules of $\Adot$ are all free Abelian, then so are all of the Morse modules of $\psi_f[-1]\Adot$ and $\phi_f[-1]\Adot$.
\end{thm}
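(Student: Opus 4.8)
The plan is to reduce the statement to the case where $\Adot$ is a perverse sheaf, and then to quote the two cited results of \cite{singenrich}. A naive attempt — feeding the canonical triangle relating $j_f^*\Adot$, $\psi_f[-1]\Adot$ and $\phi_f[-1]\Adot$ into the functor $m^\bullet_S$ — does not obviously succeed, because the Morse modules of the restriction $j_f^*\Adot$, which are computed on $V(f)$, need not be visibly free even when those of $\Adot$ on $X$ are; so one wants a route that stays within the perverse heart.

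First, observe that for each stratum $S$ the assignment $\Adot\mapsto m^\bullet_S(\Adot)$ is, up to the fixed shift by $-\dim S$, the hypercohomology of a fixed pair of spaces, hence a cohomological functor on $D^b_c(X)$. By \thmref{pureperverse}, a perverse sheaf has all of its Morse modules concentrated in degree $0$. Hence, in the spectral sequence obtained by applying $m^\bullet_S$ to the perverse-truncation tower of $\Adot$,
$$
E_2^{p,q}\ =\ m^{p}_S\big({}^p\mathcal H^q(\Adot)\big)\ \Longrightarrow\ m^{p+q}_S(\Adot),
$$
every term with $p\neq 0$ vanishes; the spectral sequence therefore degenerates and yields natural isomorphisms $m^k_S(\Adot)\cong m^{0}_S\big({}^p\mathcal H^k(\Adot)\big)$ for all $k$ and all $S$. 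Consequently, ``$\Adot$ has free Abelian Morse modules'' is equivalent to ``each perverse cohomology sheaf ${}^p\mathcal H^k(\Adot)$ has free Abelian Morse modules.''

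Next, use that $\psi_f[-1]$ and $\phi_f[-1]$ are $t$-exact for the perverse $t$-structures on $X$ and on $V(f)$, so that ${}^p\mathcal H^k\big(\psi_f[-1]\Adot\big)\cong\psi_f[-1]\big({}^p\mathcal H^k(\Adot)\big)$, and similarly with $\phi_f[-1]$ in place of $\psi_f[-1]$. Combining this with the previous paragraph, the theorem reduces to the following: if $\Pdot$ is a perverse sheaf all of whose Morse modules $m^0_S(\Pdot)$ are free Abelian, then all of the Morse modules $m^0_T\big(\psi_f[-1]\Pdot\big)$ and $m^0_T\big(\phi_f[-1]\Pdot\big)$ (now $T$ a stratum of $V(f)$) are free Abelian as well. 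This is precisely what \cite{singenrich}, Theorems 3.3 and 3.5, supply: those results express the Morse data of $\psi_f[-1]\Pdot$ and of $\phi_f[-1]\Pdot$ along each stratum of $V(f)$ as being assembled — by direct sums with suitable multiplicities, by passage to subobjects, and by extensions — out of the modules $m^0_S(\Pdot)$ for strata $S$ of $X$; each of these operations on finitely generated Abelian groups preserves freeness, and the claim follows.

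The principal obstacle I anticipate is not conceptual but a matter of faithful bookkeeping: reconciling the normal-Morse-data notation used here with the enriched-cycle formalism of \cite{singenrich}, keeping track of the shifts hidden in $\psi_f[-1]$ versus $\psi_f$ (and likewise for $\phi_f$), and — most importantly — checking that the formulas of Theorems 3.3 and 3.5 there really involve only freeness-preserving constructions, so that in particular no genuine quotient of a free Abelian group, which could introduce torsion, is ever taken. The ancillary facts (the $t$-exactness of $\psi_f[-1]$ and $\phi_f[-1]$, and the degeneration of the perverse spectral sequence for the cohomological functor $m^\bullet_S$) are standard, but their indexing should be stated with care.
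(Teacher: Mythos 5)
Your argument is correct, but it takes a longer route than the paper, which offers no proof at all beyond the remark that the statement is immediate from Theorems 3.3 and 3.5 of \cite{singenrich}. Those theorems are stated for an arbitrary bounded constructible complex, not only for perverse sheaves: they compute the graded enriched characteristic cycles of $\psi_f[-1]\Adot$ and $\phi_f[-1]\Adot$ --- that is, all of their Morse modules in all degrees along all strata of $V(f)$ --- and express each such module as a finite direct sum of copies of Morse modules of $\Adot$, with multiplicities given by intersection numbers with relative polar varieties. Freeness is therefore inherited with no reduction step at all, which is exactly why the paper calls the result ``immediate''; your closing worry about whether the formulas involve only freeness-preserving operations is the real content of that word, and the answer is yes: they are direct sums, with no quotients or nontrivial extensions. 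Your detour through the perverse heart --- degeneration of the perverse spectral sequence for the cohomological functor $m^\bullet_S$ via \thmref{pureperverse}, followed by $t$-exactness of $\psi_f[-1]$ and $\phi_f[-1]$ --- is sound, and it is what one would have to do if the cited results were only available for perverse sheaves; but it is unnecessary here, and it imports one genuinely delicate ingredient that should not be waved at, namely $t$-exactness of the shifted nearby and vanishing cycle functors for the perverse $t$-structure over $\Z$ rather than over a field (true, but requiring the correct integral perversity conventions). The direct application of the cited formulas avoids that entirely.
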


\bigskip

There are two standard natural distinguished triangles, and their duals, related to the Milnor fibration. We write these triangles in their in-line forms. We remind the reader that $j_f$ denotes the closed inclusion of $V(f)$ into $X$, and we let $i_f$ denote the open inclusion of $X-V(f)$ into $X$.

\medskip

\noindent {\bf The vanishing triangle}:
$$
\rightarrow j_f^*[-1]\Adot\xrightarrow{\operatorname{comp}}\psi_f[-1]\Adot\xrightarrow{\operatorname{can}}\phi_f[-1]\Adot\arrow{[1]}.
$$

\noindent {\bf The Wang triangle}:
$$
\rightarrow j_f^*[-1]{i_f}_*i_f^*\Adot\rightarrow\psi_f[-1]\Adot\xrightarrow{\operatorname{id}-T_f}\psi_f[-1]\Adot\arrow{[1]}.
$$

\noindent {\bf The dual vanishing triangle}:
$$
\rightarrow \phi_f[-1]\Adot\xrightarrow{\operatorname{var}}\psi_f[-1]\Adot\xrightarrow{\operatorname{pmoc}} j_f^![1]\Adot\arrow{[1]}.
$$

\noindent {\bf The dual Wang triangle}:
$$
\rightarrow\psi_f[-1]\Adot\xrightarrow{\operatorname{id}-T_f}\psi_f[-1]\Adot\rightarrow j_f^![1]{i_f}_!i_f^!\Adot\arrow{[1]}.
$$

There are equalities of natural maps 
$$\operatorname{var}\circ\operatorname{can}=\operatorname{id}-T_f\hskip 0.4in\textnormal{and}\hskip 0.4in \operatorname{can}\circ\operatorname{var}=\operatorname{id}-\widetilde T_f.
$$

\bigskip

The following theorem is a dual version of a generalization of the primary result of L\^e in \cite{leattach}.

\begin{thm}\label{thm:sliceattach} Suppose that $\Pdot$ is a perverse sheaf on $X$, with free Abeian Morse modules. Let $\mbf x\in V(f)$, and let $L$ be the restriction to $X$ of a generic affine linear form such that $L(\mbf x)=0$. Let $\check m_{\mbf x}$ denote the inclusion of $\{\mbf x\}$ into $V(f, L)$, let $\hat m_{\mbf x}$ denote the inclusion of $\{\mbf x\}$ into $V(f)$, let $\hat L$ denote the restriction of $L$ to $V(f)$, and let $j_{\hat L}$ denote the inclusion of $V(f, L)$ into $V(f)$.

Then, for all $k\geq 1$,
$$H^{k+1}\left(\hat m^!_{\mbf x}\psi_f[-1]\Pdot\right) \ \cong \ H^{k}\left(\check m^!_{\mbf x}\psi_{f_{|_{V(L)}}}[-1]\big(\Pdot_{|_{V(L)}}[-1]\big)\right).
$$
In addition, there is an exact sequence
$$
0\rightarrow H^{0}\left(\hat m^!_{\mbf x}\psi_f[-1]\Pdot\right)\rightarrow \Z^\tau\rightarrow H^{0}\left(\check m^!_{\mbf x}\psi_{f_{|_{V(L)}}}[-1]\big(\Pdot_{|_{V(L)}}[-1]\big)\right)\rightarrow H^{1}\left(\hat m^!_{\mbf x}\psi_f[-1]\Pdot\right)\rightarrow 0,
$$
where $\tau$ can be calculated as the intersection number of a general version of the relative polar curve with $V(f)$ at $\mbf x$.

More generally, if $H^{c}$ denotes the intersection of $X$ with a generic affine linear subspace of codimension $c$ in $\U$, then,  
for all $k\geq 1$
$$H^{k+c}(\hat m^!_{\mbf x}\psi_f[-1]\Pdot)\ \cong \ H^{k}\left(\check m^!_{\mbf x}\psi_{f_{|_{H^{c}}}}[-1]\big(\Pdot_{|_{H^{c}}}[-c]\big)\right).
$$
\end{thm}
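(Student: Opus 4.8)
The plan is to prove the statement first for a single generic hyperplane — the case $c=1$ — and then bootstrap to arbitrary codimension by iteration. The heart of the $c=1$ case is a comparison of the costalk of $\psi_f[-1]\Pdot$ at $\mbf x$ with that of its generic hyperplane slice, obtained by feeding the generic linear form $\hat L=L_{|_{V(f)}}$ into the vanishing-cycle triangles of the previous section, together with one geometric input: that $\psi_{\hat L}[-1]$ applied to $\psi_f[-1]\Pdot$ agrees with the nearby cycles of the sliced data. This last point is the derived-category incarnation of (the perverse-sheaf generalization of) L\^e's attaching theorem from \cite{leattach}, and it is where the relative polar curve, hence the integer $\tau$, enters.

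For the reduction, I write a generic codimension-$c$ linear subspace as a generic flag $H^c=V(L_1)\cap\cdots\cap V(L_c)$, each $L_i$ chosen generic relative to the data already sliced by $V(L_1,\dots,L_{i-1})$. Generic slicing preserves normal Morse data, so $\Pdot_{|_{V(L_1)}}[-1]$ is again perverse with free Abelian Morse modules, and $\big(\Pdot_{|_{V(L_1,\dots,L_{i-1})}}[-(i-1)]\big)_{|_{V(L_i)}}[-1]=\Pdot_{|_{V(L_1,\dots,L_i)}}[-i]$; so I apply the $c=1$ isomorphism $c$ times in succession. At the $i$-th application the cohomological index equals $k+c-i\ge k\ge1$, so one never leaves the range in which the $c=1$ statement is an isomorphism — which is exactly why only $k\ge1$ is required in the general statement. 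Hence it suffices to treat $c=1$.

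So set $\Qdot:=\psi_f[-1]\Pdot$, which by \cite{singenrich} is perverse on $V(f)$ with free Abelian Morse modules. Since $L$, hence $\hat L$, is generic, the stratified critical locus of $\hat L$ near $\mbf x$ relative to a stratification adapted to $\Qdot$ is $\{\mbf x\}$, so $\phi_{\hat L}[-1]\Qdot$ is supported at $\mbf x$; being perverse with free Abelian Morse modules, it is the skyscraper $\Z^\tau$ in degree $0$, $\tau$ being the rank of the degree-zero vanishing cohomology at $\mbf x$ of a generic linear form on $\Qdot$, which by the stratified-Morse-theoretic description of that rank equals the intersection number at $\mbf x$ of a general relative polar curve of $f$, weighted by $\Pdot$, with $V(f)$. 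Applying $\check m^!_{\mbf x}$ to the dual vanishing triangle $\phi_{\hat L}[-1]\Qdot\xrightarrow{\operatorname{var}}\psi_{\hat L}[-1]\Qdot\xrightarrow{\operatorname{pmoc}}j_{\hat L}^![1]\Qdot\xrightarrow{[1]}$ and using $\check m^!_{\mbf x}\circ j_{\hat L}^!=\hat m^!_{\mbf x}$ produces the triangle $\Z^\tau\to\check m^!_{\mbf x}\psi_{\hat L}[-1]\Qdot\to\hat m^!_{\mbf x}\Qdot[1]\xrightarrow{[1]}$. Its long exact sequence, with the vanishing of the costalks of the perverse sheaves $\psi_{\hat L}[-1]\Qdot$ and $\Qdot$ below degree $0$, yields at once $H^k(\check m^!_{\mbf x}\psi_{\hat L}[-1]\Qdot)\cong H^{k+1}(\hat m^!_{\mbf x}\Qdot)$ for $k\ge1$ and the exact sequence $0\to H^0(\hat m^!_{\mbf x}\Qdot)\to\Z^\tau\to H^0(\check m^!_{\mbf x}\psi_{\hat L}[-1]\Qdot)\to H^1(\hat m^!_{\mbf x}\Qdot)\to0$.

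It remains — and this is the main obstacle — to identify $\psi_{\hat L}[-1]\psi_f[-1]\Pdot$ with $\psi_{f_{|_{V(L)}}}[-1]\big(\Pdot_{|_{V(L)}}[-1]\big)$ on $V(f,L)$. Away from $\mbf x$ this is the commutation of nearby cycles with non-characteristic restriction, $V(L)$ being transverse there to a stratification adapted to both $\Pdot$ and $\Qdot$. At $\mbf x$ one computes both stalks: up to one common shift each is the $\Pdot$-hypercohomology of a generic hyperplane section of the Milnor fibre $F_{f,\mbf x}$ — cut by a parallel hyperplane $L^{-1}(\eta)$ in the first case and by $V(L)=L^{-1}(0)$ in the second — and both sections are transverse to $F_{f,\mbf x}$, the second precisely because the general relative polar curve meets $V(L)$ only at $\mbf x\notin F_{f,\mbf x}$; so the natural comparison map is an isomorphism, and a morphism of perverse sheaves that is a stalkwise isomorphism is an isomorphism. (Equivalently, this step is the Verdier dual on $X$ of the perverse-sheaf generalization of the attaching theorem of \cite{leattach}: duality exchanges $m^*_{\mbf x}$ with $m^!_{\mbf x}$, commutes with $\psi_f[-1]$ and with generic slicing, fixes $\Z^\tau$, and produces no $\operatorname{Tor}$ terms because $\Pdot$ — hence $\vdual\Pdot$ — has free Abelian Morse modules; I would either quote the needed commutation from \cite{leattach} and standard techniques or derive it by running this stalk/duality comparison through the vanishing triangle of $\hat L$.) Substituting this identification into the isomorphism and the four-term sequence of the previous paragraph gives the $c=1$ statement, and the reduction completes the proof; the one genuinely non-formal ingredient is the transversality at $\mbf x$ of $V(L)$ to $F_{f,\mbf x}$ read off the polar curve, which is L\^e's attaching theorem.
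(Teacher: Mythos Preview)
Your argument follows the paper's exactly: apply $\check m^!_{\mbf x}$ to the dual vanishing triangle for $\hat L$ on $\psi_f[-1]\Pdot$, identify the $\phi_{\hat L}$-term as $\Z^\tau$ concentrated in degree $0$ and the $\psi_{\hat L}$-term with the sliced nearby cycles, read off the long exact sequence, and induct for general $c$. The only difference is bibliographic rather than structural: where the paper simply cites \cite{enrichpolar} (Theorem~4.1 for the costalk isomorphism $H^*\big(\check m^!_{\mbf x}\psi_{\hat L}[-1]\psi_f[-1]\Pdot\big)\cong H^*\big(\check m^!_{\mbf x}\psi_{f_{|_{V(L)}}}[-1](\Pdot_{|_{V(L)}}[-1])\big)$ and Corollary~4.6.3 for the value of $\tau$), you attempt to sketch these identifications from scratch --- your sketch of the $\psi$-identification leaves the ``natural comparison map'' unconstructed and is the one soft spot, but that map and its isomorphism property are exactly the content of \cite{enrichpolar}, Theorem~4.1.
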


\begin{proof} Theorem 4.1 of \cite{enrichpolar} implies that we have an isomorphism 
$$H^*\left(\check m^!_{\mbf x}\psi_{\hat L}[-1]\psi_f[-1]\Pdot\right) \ \cong \ H^*\left(\check m^!_{\mbf x}\psi_{f_{|_{V(L)}}}[-1]\big(\Pdot_{|_{V(L)}}[-1]\big)\right).\eqno{{(\dagger)}}
$$
The dual vanishing triangle for $\hat L$, applied to $\psi_f[-1]\Pdot$, yields
$$
\rightarrow \phi_{\hat L}[-1]\psi_f[-1]\Pdot\rightarrow \psi_{\hat L}[-1]\psi_f[-1]\Pdot\rightarrow j_{\hat L}^![1]\psi_f[-1]\Pdot\arrow{[1]}.$$
Applying $\check m^!_{\mbf x}$, we have a distinguished triangle
$$
\rightarrow \check m^!_{\mbf x}\phi_{\hat L}[-1]\psi_f[-1]\Pdot\rightarrow \check m^!_{\mbf x}\psi_{\hat L}[-1]\psi_f[-1]\Pdot\rightarrow \hat m^!_{\mbf x}[1]\psi_f[-1]\Pdot\arrow{[1]}.\eqno{{(*)}}$$

Now, for generic $L$, $\mbf x$ is an isolated point in the support of the perverse sheaf $\phi_{\hat L}[-1]\psi_f[-1]\Pdot$. Hence, its cohomology is concentrated in degree $0$, and Corollary 4.6.3 of \cite{enrichpolar} tells us that 
$$H^0\left(\check m^!_{\mbf x}\phi_{\hat L}[-1]\psi_f[-1]\Pdot\right) \ \cong \ H^0\left(\check m^*_{\mbf x}\phi_{\hat L}[-1]\psi_f[-1]\Pdot\right)\eqno{{(\ddagger)}}
$$
is $\Z^\tau$, where $\tau$ is intersection number of a general version of the relative polar curve with $V(f)$ at $\mbf x$.

The first two statements of the theorem now follow from taking the long exact cohomology sequence of $(*)$ our last distinguished triangle, and using $(\dagger)$ and $(\ddagger)$. The final statement follows from the first statement by induction. 
\end{proof}

\medskip

\noindent\rule{1in}{1pt}

\noindent{\bf $\Adot$-critical points}

\medskip

We want critical points, with respect to a complex $\Adot$, to be places where the hypercohomology of the fibers of $f$ changes.

\smallskip

\begin{defn} Suppose that $\Adot\in D^b_c(X)$. We say that $\mbf p\in X$ is an {\bf $\Adot$-critical point} of $f$ if and only if $H^*(\phi_{f-f(\mbf p)}\Adot)\neq 0$. We let $\Sigma_{\Adot}f$ denote the set of $\Adot$-critical points of $f$.

In the special case where $\Adot$ is the constant sheaf, or a shift of it, we simply write $\Sigma_{\Z}f$, rather than $\Sigma_{\Z^\bullet_X}f$.

\end{defn}

\bigskip

The following properties are well-known.

\begin{thm}
\begin{enumerate}

\item Suppose that $\strat$ is a Whitney stratification of $X$, with respect to which $\Adot$ is constructible. Then, $\Sigma_{\Adot}f$ is contained in the stratified critical locus, i.e., $\Sigma_{\Adot}f\subseteq \bigcup_{S\in\strat}\Sigma(f_{|_S})$.

\item As stratified critical values are locally isolated, $\Adot$-critical values are locally isolated, i.e., near a point $\mbf x\in X$, $\Sigma_{\Adot}f\subseteq V(f-f(\mbf x))$. Thus, near $\mbf x\in V(f-f(\mbf x))$, 
$$
\overline{\Sigma_{\Adot}f} \ = \ \supp \phi_{f-f(\mbf x)}\Adot.
$$ 

\item Suppose that $\Pdot$ is a perverse sheaf on $X$, $\mbf x\in V(f)$, and let $s:=\dim_{\mbf x}\Sigma_{\Pdot}f$.  Then, unless $-s\leq k\leq 0$, $H^k\left ({\hat m}^*_{\mbf x}\phi_f[-1]\Pdot\right )=0$ and, unless $0\leq k\leq s$, $H^k\left ({\hat m}^!_{\mbf x}\phi_f[-1]\Pdot\right )=0$.

\item Suppose that $\Pdot$ is a perverse sheaf on $X$, $\mbf x\in V(f)$, and let $L$ be the restriction to $X$ of a generic affine linear form such that $L(\mbf x)=0$. Then, $\dim_{\mbf x}\Sigma_{\Pdot}L =0$, and so 
$$
{\hat m}^*_{\mbf x}\phi_L[-1]\Pdot \ \cong \ {\hat m}^!_{\mbf x}\phi_L[-1]\Pdot
$$
\noindent has possibly non-zero cohomology only in degree $0$.

\end{enumerate}
\end{thm}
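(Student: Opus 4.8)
The plan is to establish the four items in sequence, since each relies on its predecessor together with standard facts about constructible complexes and perverse sheaves. Item (1) follows from the microlocal description of stratified critical points: since $\Adot$ is constructible with respect to $\strat$, its singular support $\ms(\Adot)$ is contained in the union of the conormal varieties of the strata, so at any point $\mbf p$ where $f$ is a stratified submersion (that is, $\mbf p\notin\bigcup_{S}\Sigma(f_{|_S})$) the covector $df(\mbf p)$ meets $\ms(\Adot)$ only along the zero section and $f-f(\mbf p)$ is non-characteristic for $\Adot$ at $\mbf p$. The non-characteristic vanishing of the vanishing-cycle functor (see \cite{kashsch}, or equivalently the triviality of normal Morse data at a non-critical point from \cite{stratmorse}) then gives that $\phi_{f-f(\mbf p)}\Adot$ vanishes near $\mbf p$, so $\mbf p\notin\Sigma_{\Adot}f$; the contrapositive is (1).

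For (2) I would combine (1) with the classical fact, provable via the curve selection lemma, that a complex analytic function restricted to any connected analytic Whitney stratum has locally isolated critical values. Shrinking to a neighborhood of $\mbf x$ and translating so that $f(\mbf x)=0$, this confines $\Sigma_{\Adot}f$ to $V(f)$; for $\mbf y$ in that neighborhood one then has $\mbf y\in\Sigma_{\Adot}f$ exactly when $f(\mbf y)=0$ and the stalk of $\phi_f\Adot$ at $\mbf y$ is non-zero, since along $V(f)$ the germ of $f-f(\mbf y)$ equals the germ of $f$. Taking closures gives $\overline{\Sigma_{\Adot}f}=\supp\phi_{f-f(\mbf x)}\Adot$ near $\mbf x$.

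Item (3) is then immediate: $\phi_f[-1]$ sends the perverse sheaf $\Pdot$ to a perverse sheaf on $V(f)$, and by (2) this perverse sheaf is supported, near $\mbf x$, on $\overline{\Sigma_{\Pdot}f}$, which has dimension $s$ at $\mbf x$; applying the stalk/costalk vanishing bounds for a perverse sheaf supported on an $r$-dimensional subset (recorded earlier in the excerpt) with $r=s$ yields the claimed ranges $-s\leq k\leq 0$ and $0\leq k\leq s$. For (4), the genericity statement $\dim_{\mbf x}\Sigma_{\Pdot}L=0$ holds because, for $L$ outside a proper subvariety of the dual projective space, the hyperplane $V(L)$ is transverse to every stratum of a stratification adapted to $\Pdot$ away from an isolated set of points, so $L$ is a stratified submersion off a $0$-dimensional set and (1) applies; feeding $s=0$ into (3) forces both ${\hat m}^*_{\mbf x}\phi_L[-1]\Pdot$ and ${\hat m}^!_{\mbf x}\phi_L[-1]\Pdot$ to be concentrated in degree $0$. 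Finally, a perverse sheaf whose support near $\mbf x$ is $0$-dimensional is, on a small neighborhood, the skyscraper $\Z^m$ placed in degree $0$ at $\mbf x$; since $\{\mbf x\}$ is closed, ${\hat m}^*_{\mbf x}$ and ${\hat m}^!_{\mbf x}$ agree on such a sheaf and the natural map $\tau_{\mbf x}$ between them is the identity, which gives the asserted isomorphism.

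The step I expect to require the most care is the genericity claim buried in (4): making precise the sense in which a generic $L$ leaves only a $0$-dimensional locus of non-transversality, i.e.\ controlling the associated polar locus. The other steps are routine bookkeeping with the non-characteristic vanishing theorem and with the stalk/costalk estimates for perverse sheaves already recorded in the excerpt.
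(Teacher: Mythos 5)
Your proposal is correct, and it is worth noting that the paper itself offers no proof at all: it introduces this theorem with ``The following properties are well-known'' and moves on. Your arguments are exactly the standard ones that statement implicitly invokes: the non-characteristic vanishing of $\phi_{f-f(\mbf p)}\Adot$ at stratified non-critical points (equivalently, triviality of the normal Morse data) for (1); curve selection for local isolation of stratified critical values, hence of $\Adot$-critical values, for (2); perversity of $\phi_f[-1]\Pdot$ plus the stalk/costalk support bounds already recorded in the paper for (3); and the genericity of $L$ (control of the relative polar locus, which the paper defers to \cite{enrichpolar}) for (4). One small imprecision: a perverse sheaf of $\Z$-modules supported at a point is a skyscraper with stalk an arbitrary finitely generated $\Z$-module in degree $0$, not necessarily $\Z^m$; this does not affect your conclusion, since $\hat m^*_{\mbf x}$ and $\hat m^!_{\mbf x}$ agree on any skyscraper at $\mbf x$.
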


\medskip

Suppose that $X$ is a local complete intersection of pure dimension $(n+1)$, and $\Pdot=\Z^\bullet_X[n+1]$. Then,  (4) above is a weak form of the fact that the complex link of a point has the homotopy-type of a bouquet of $n$-spheres.

\smallskip

The next result is well-known in the classical situation. In the general case, it follows in the same way as the classical case: from the fact that $L$ may be chosen so that the intersection of $V(L)$ with the relative polar of $(f, L)$ with respect to $\Adot$ is an isolated point. See \cite{enrichpolar}.

\begin{prop} Let $\mbf x\in V(f)$, and let $L$ be the restriction to $X$ of a generic affine linear form such that $L(\mbf x)=0$. Then, in an open neighborhood of $\mbf x$,
$$
\left(\overline{\Sigma_{{\Adot}_{|_{V(L)}}}\left(f_{|_{V(L)}}\right)}\right)\backslash\{\mbf x\}\ \ =  \ \left(V(L)\cap \overline{\Sigma_{\Adot}f}\right)\backslash\{\mbf x\}.
$$
In particular, if $\dim_{\mbf x}\Sigma_{\Adot}f\geq 1$, then 
$$
\dim_{\mbf x}\Sigma_{{\Adot}_{|_{V(L)}}}\left(f_{|_{V(L)}}\right) \ =  \ -1+\dim_{\mbf x}\Sigma_{\Adot}f.
$$
\end{prop}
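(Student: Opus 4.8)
The plan is to rephrase both sides of the claimed equality as supports of vanishing-cycle complexes in a small ball around $\mbf x$, and then to import from the relative polar curve theory of \cite{enrichpolar} the single fact that a generic hyperplane section interacts well with such supports away from the chosen point.

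First I would reduce to the case $f(\mbf x)=0$ and shrink. Both $\Sigma_{\Adot}f$ and $\Sigma_{{\Adot}_{|_{V(L)}}}(f_{|_{V(L)}})$ are contained in the respective stratified critical loci, whose critical values are locally isolated; so after replacing $\U$ by a small open neighborhood $\U'$ of $\mbf x$ we have $\Sigma_{\Adot}f\subseteq V(f)$ and $\Sigma_{{\Adot}_{|_{V(L)}}}(f_{|_{V(L)}})\subseteq V(f,L)$, and consequently, in $\U'$,
$$
\overline{\Sigma_{\Adot}f}\ =\ \supp\phi_f[-1]\Adot\qquad\text{and}\qquad \overline{\Sigma_{{\Adot}_{|_{V(L)}}}(f_{|_{V(L)}})}\ =\ \supp\phi_{f_{|_{V(L)}}}[-1]\big({\Adot}_{|_{V(L)}}\big),
$$
shifts of ${\Adot}_{|_{V(L)}}$ being irrelevant for supports. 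Fix a Whitney stratification $\strat$ of $X$ satisfying Thom's $a_f$ condition and with respect to which $\Adot$ is constructible; then $\phi_f[-1]\Adot$ is constructible with respect to the induced stratification of $V(f)$.

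The heart of the matter, and the step I expect to be the real obstacle, is the choice of $L$. I would require $V(L)$ to be transverse to every stratum of $\strat$ throughout $\U'-\{\mbf x\}$, to be generic for the germ $(\overline{\Sigma_{\Adot}f},\mbf x)$ as a hyperplane section, and, crucially, to be chosen so that the part of the singular support of $\phi_f[-1]\Adot$ lying over points of $\U'-\{\mbf x\}$ contains no conormal direction of $V(L)$ — equivalently, so that the relative polar set $\Gamma_{f,L}(\Adot)$ of $(f,L)$ with respect to $\Adot$ meets $\U'$ only at $\mbf x$. That such $L$ exist is precisely the generic-linear-form statement underlying the polar curve results of \cite{enrichpolar}, and it is the exact analogue of L\^e's choice in the classical ICIS case. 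Granting it, $V(L)$ is non-characteristic for both $\Adot$ and $\phi_f[-1]\Adot$ at every point of $(V(L)\cap\U')-\{\mbf x\}$; since non-characteristic restriction commutes with the vanishing-cycle functor up to the usual shift (again \cite{enrichpolar}, in the same spirit as the isomorphism $(\dagger)$ used in the proof of \thmref{thm:sliceattach}), for each such point $\mbf p$ we obtain, locally in $V(L)$ near $\mbf p$,
$$
\phi_{f_{|_{V(L)}}}[-1]\big({\Adot}_{|_{V(L)}}[-1]\big)\ \cong\ \big(\phi_f[-1]\Adot\big)_{|_{V(L)}}[-1].
$$
Hence, for $\mbf p\in\U'$ with $\mbf p\neq\mbf x$, we have $\mbf p\in\supp\phi_{f_{|_{V(L)}}}[-1]({\Adot}_{|_{V(L)}})$ if and only if $\mbf p\in V(L)\cap\supp\phi_f[-1]\Adot$; taking closures in $\U'$ and deleting $\mbf x$ gives the asserted set equality.

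Finally, the dimension statement is a bookkeeping consequence. If $\dim_{\mbf x}\Sigma_{\Adot}f\geq 1$, then $\mbf x$ is a non-isolated point of the analytic germ $(\overline{\Sigma_{\Adot}f},\mbf x)$, so the generic hyperplane $V(L)$ cuts it down to a germ of dimension exactly $\dim_{\mbf x}\Sigma_{\Adot}f-1\geq 0$; by the set equality just established, $\overline{\Sigma_{{\Adot}_{|_{V(L)}}}(f_{|_{V(L)}})}$ coincides with $V(L)\cap\overline{\Sigma_{\Adot}f}$ on $\U'-\{\mbf x\}$, hence accumulates at $\mbf x$ with the same local dimension $\dim_{\mbf x}\Sigma_{\Adot}f-1$. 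Everything outside the choice of $L$ is routine manipulation of supports of constructible complexes; the genuine content is the simultaneous satisfiability of the genericity conditions on $L$ together with the clean behavior of $\phi$ under the resulting non-characteristic restriction, both of which I would quote from \cite{enrichpolar}.
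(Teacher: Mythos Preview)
Your proposal is correct and follows essentially the same approach as the paper. The paper does not give a detailed proof of this proposition; it merely remarks that the result ``follows in the same way as the classical case: from the fact that $L$ may be chosen so that the intersection of $V(L)$ with the relative polar of $(f, L)$ with respect to $\Adot$ is an isolated point,'' and refers to \cite{enrichpolar}. Your write-up is a faithful expansion of exactly that hint: identify both sides with supports of vanishing-cycle complexes, invoke the genericity of $L$ from the relative polar curve theory to obtain non-characteristic restriction away from $\mbf x$, and use the compatibility of $\phi$ with such restriction (the same mechanism as $(\dagger)$ in \thmref{thm:sliceattach}). One small wording issue: what must be isolated at $\mbf x$ is the intersection $V(L)\cap\Gamma_{f,L}(\Adot)$, not the polar set itself inside $\U'$; but this does not affect the argument, since the condition you actually use---non-characteristicity of $V(L)$ for $\phi_f[-1]\Adot$ at every point of $\U'-\{\mbf x\}$---is the right one.
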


\medskip

\noindent\rule{1in}{1pt}

\noindent{\bf The variation map}

\medskip

The natural map $\phi_f[-1]\xrightarrow{\operatorname{var}}\psi_f[-1]$ in the dual vanishing triangle is called the {\it variation map}, and we will now explain its relationship to the classical variation map between the cohomology of the Milnor fiber and the Milnor fiber modulo its boundary.

\medskip

Suppose that $\mbf p\in V(f)$ and that $\dim_{\mbf p} \Sigma_{\Adot}f=0$. Then, since $\mbf p$ is an isolated point in the support of $\phi_f[-1]\Adot$, 
$${\hat m}^*_{\mbf p}\phi_f[-1]\Adot \ \cong  \ {\hat m}^!_{\mbf p}\phi_f[-1]\Adot,$$
and so we have a morphism $\gamma_{\mbf p}$ defined by
$$
{\hat m}^*_{\mbf p}\psi_f[-1]\Adot\xrightarrow{{\hat m}^*_{\mbf p}(\operatorname{can})}{\hat m}^*_{\mbf p}\phi_f[-1]\Adot \ \cong \ {\hat m}^!_{\mbf p}\phi_f[-1]\Adot\xrightarrow{{\hat m}^!_{\mbf p}(\operatorname{var})}{\hat m}^!_{\mbf p}\psi_f[-1]\Adot.
$$

Recalling the natural map ${\hat m}_{\mbf p}^!\psi_f[-1]\Adot\arrow{\tau_{\mbf p}}{\hat m}_{\mbf p}^*\psi_f[-1]\Adot$, we have
$$
\tau_{\mbf p}\circ\gamma_{\mbf p}=\operatorname{id}-{\hat m}_{\mbf p}^*T_f\hskip 0.2in\textnormal{and}\hskip 0.2in \gamma_{\mbf p}\circ\tau_{\mbf p}=\operatorname{id}-{\hat m}_{\mbf p}^!T_f.
$$

On the level of cohomology, this gives us: if $\mbf p\in V(f)$ and $\dim_{\mbf p} \Sigma_{\Adot}f=0$, then there are well-defined {\it variation maps}:
$$
\hyp^{k-1}(F_{f, \mbf p};\ \Adot)\xrightarrow{\nu_{\mbf p}^{k-1}}\hyp^{k-1}(F_{f, \mbf p}, \partial F_{f, \mbf p};\ \Adot)
$$
such that, given the canonical maps
$$
\hyp^{k-1}(F_{f, \mbf p},\partial F_{f, \mbf p};\ \Adot)\xrightarrow{\tau_{\mbf p}^{k-1}}\hyp^{k-1}(F_{f, \mbf p} ;\ \Adot),
$$
we have
$$
\hyp^{k-1}(F_{f, \mbf p},\partial F_{f, \mbf p};\ \Adot)\xrightarrow{\nu_{\mbf p}^{k-1}\circ\tau_{\mbf p}^{k-1} \ = \ \operatorname{id}-T^{k-1}_{f, \mbf p}}\hyp^{k-1}(F_{f, \mbf p},\partial F_{f, \mbf p} ;\ \Adot),
$$
and
$$
\hyp^{k-1}(F_{f, \mbf p};\ \Adot)\xrightarrow{\tau_{\mbf p}^{k-1}\circ\nu_{\mbf p}^{k-1} \ = \ \operatorname{id}-T^{k-1}_{f, \mbf p}}\hyp^{k-1}(F_{f, \mbf p};\ \Adot).
$$

\section{Results for local complete intersections} 

Our goal in this section is to see what the derived category results from the previous section tell us about the classical objects -- the Milnor fiber, the real link, and the complex link -- that we discussed in Section 2. All of the results which appear here are stated in completely classical terms, though the proofs go through the derived category.

Throughout this section, $X$ will denote a purely $(n+1)$-dimensional local complete intersection. For convenience, we assume that $\0\in X$,  that $X$ is locally embedded in an open subset $\U$ of $\C^N$, and that $\0$ is a singular point of $X$ (for otherwise, everything here follows trivially from known results). We let $f:(X,\mbf 0)\rightarrow(\C,0)$ be a complex analytic function. Finally, whenever we write ordinary cohomology, we mean with integral coefficients.

Note that, if $f$ vanishes identically on some irreducible components of $X$, the Milnor fiber of $f$ is the same as the Milnor fiber of $f$ restricted to $\overline{X\backslash V(f)}$. Consequently, we will assume that $f$ does not vanish identically on any irreducible component of $X$ which contains the origin.

In the proofs,  $\Pdot$ will denote the perverse sheaf $\Z^\bullet_X[n+1]$; by \thmref{thm:dprops}, $\vd\Pdot$  is perverse, with free Abelian Morse modules, and is isomorphic to $\Z_{X\backslash\Sigma X}^\bullet[n+1]$ when restricted to $X\backslash\Sigma X$.

As in the introduction, we let $d:=\dim_\0\Sigma X$. If $d=0$, we define $d_f:=0$; otherwise, we define $d_f:=\dim_\0\overline{\Sigma X\backslash V(f)}$. We let $s_f:=\dim_\0\Sigma_{\Z}f$. If $\0\not\in \Sigma_{\Z}f$, we set $s_f=-\infty$.

\medskip

\begin{thm}\label{thm:onedegree} The reduced cohomology $\widetilde H^k(F_{f, \mbf 0})=0$, except, possibly, when $n-s_f\leq k\leq n$. Furthermore, $\widetilde H^{n-s_f}(F_{f, \mbf 0})$ is free Abelian.

Suppose that we are in the special case where $s_f=0$. Then, the reduced cohomology $\widetilde H^k(F_{f, \mbf 0})$ equals zero, except, possibly, when $k=n$. In addition, $\operatorname{rank}\widetilde H^n(F_{f, \mbf 0})\geq \operatorname{rank}\widetilde H^n(\cL_{X,\mbf 0})$.
\end{thm}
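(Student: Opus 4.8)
The workhorse is $\Pdot=\Z^\bullet_X[n+1]$, which is perverse by \thmref{lelink} and \thmref{pureperverse}, and whose Morse modules $m^k_S(\Pdot)\cong\widetilde H^{n+k-\dim S}(\cL_{X,S})$ are free Abelian and concentrated in degree $0$, again by \thmref{lelink}. The first step is the translation
$$
\widetilde H^{n+k}(F_{f,\0})\ \cong\ \hyp^{n+1+k}(B_\epsilon\cap X,\,F_{f,\0};\,\Z)\ \cong\ H^k\!\big(\hat m^*_{\0}\phi_f[-1]\Pdot\big),
$$
valid for all $k$ since $B_\epsilon\cap X$ is contractible; here $\phi_f[-1]\Pdot$ is a perverse sheaf on $V(f)$, supported on $\overline{\Sigma_{\Z}f}$ (which is $s_f$-dimensional at $\0$), with free Abelian Morse modules, because $\Pdot$ has them and $\phi_f[-1]$ preserves this property.

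\textbf{The first paragraph of the statement.} The support estimate for a perverse sheaf whose support is $s_f$-dimensional gives $H^k(\hat m^*_{\0}\phi_f[-1]\Pdot)=0$ unless $-s_f\le k\le 0$, i.e. $\widetilde H^k(F_{f,\0})=0$ unless $n-s_f\le k\le n$. For the freeness of $\widetilde H^{n-s_f}(F_{f,\0})=H^{-s_f}(\hat m^*_{\0}\Qdot)$ with $\Qdot:=\phi_f[-1]\Pdot$, I would slice down: for a generic affine $H'\ni\0$, the complex $\Qdot_{|H'}[-1]$ is again perverse, its support at $\0$ has dimension $s_f-1$, its Morse modules remain free Abelian, and, as restriction does not change stalks, $H^{-s_f}(\hat m^*_{\0}\Qdot)=H^{-(s_f-1)}(\hat m^*_{\0}\Qdot_{|H'}[-1])$ is precisely the bottom stalk cohomology of the slice. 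After $s_f$ slices one reaches a perverse sheaf on a $0$-dimensional germ, whose stalk cohomology at $\0$ is concentrated in degree $0$, equals its costalk, and is one of its own Morse modules — hence free Abelian.

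\textbf{The $s_f=0$ paragraph.} The vanishing statement is the case $s_f=0$ of the first paragraph; moreover $\phi_f[-1]\Pdot$ is now a skyscraper $V[0]$ at $\0$ with $V\cong\widetilde H^n(F_{f,\0})$ free Abelian. For the rank inequality, realize the complex link as a Milnor fiber: $\cL_{X,\0}=F_{\hat L,\0}$ for a generic affine linear $\hat L$ on $X$ with $\hat L(\0)=0$, and since $\dim_{\0}\Sigma_{\Z}\hat L=0$ the same discussion gives $\phi_{\hat L}[-1]\Pdot\cong W[0]$ with $W\cong\widetilde H^n(\cL_{X,\0})$. So I must prove $\operatorname{rank}V\ge\operatorname{rank}W$. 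The route I would take is to compare the two skyscrapers $\phi_f[-1]\Pdot$ and $\phi_{\hat L}[-1]\Pdot$ through the geometry of $f$ versus a generic linear form: L\^e's relative-polar-curve machinery (a non-dual companion of \thmref{thm:sliceattach}) should present $F_{f,\0}$, up to homotopy, as $\cL_{X,\0}$ with finitely many $n$-cells attached, which yields a surjection $H^n(F_{f,\0})\twoheadrightarrow H^n(\cL_{X,\0})$ by restriction and hence the inequality. Equivalently, one may seek a local Euler-characteristic formula (Lê--Teissier type, extended to functions on local complete intersections) expressing $\widetilde\chi(F_{f,\0})-\widetilde\chi(\cL_{X,\0})$ as $(-1)^n$ times a sum of intersection multiplicities of polar varieties of $f$ with $V(f)$; since both reduced cohomologies are concentrated in degree $n$, such a sign-definite Euler-characteristic inequality upgrades at once to the desired rank inequality.

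\textbf{Main obstacle.} Everything up to and including the first two paragraphs of the statement is routine given the machinery of Sections 2--3. The essential difficulty is the rank inequality: one needs the comparison of $F_{f,\0}$ with $\cL_{X,\0}$ to exhibit $\cL_{X,\0}$ as (homotopy equivalent to) an actual subspace whose complement is a union of $n$-cells — equivalently, in sheaf language, a morphism $\phi_{\hat L}[-1]\Pdot\to\phi_f[-1]\Pdot$ that is a split monomorphism after $\otimes\Q$. I expect this to be the only place real work is required; the natural tools are the distinguished triangles relating $\psi_f$, $\psi_{\hat L}$ and their composites (as in the proof of \thmref{thm:sliceattach}), organized with the Verdier-duality bookkeeping that is this paper's principal device, and the delicate point is to verify that no cancellation occurs between the ``complex-link part'' and the extra vanishing cohomology created by the non-linearity of $f$.
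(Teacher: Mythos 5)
Your treatment of the vanishing range and of the freeness of $\widetilde H^{n-s_f}(F_{f,\0})$ is sound and close to the paper's. The translation to $H^k(\hat m^*_\0\phi_f[-1]\Pdot)$ together with the support estimate for the perverse sheaf $\phi_f[-1]\Pdot$ (supported on $\overline{\Sigma_{\Z}f}$ near $\0$) is exactly the paper's argument for the first paragraph. For the freeness, the paper simply invokes that $\phi_f[-1]$ preserves free Abelian Morse modules and reads the bottom stalk off from that; your iterated generic-slicing argument is a legitimate way of making that deduction explicit. The one assertion you leave unjustified --- that $\Qdot_{|_{H'}}[-1]$ is again perverse with free Abelian Morse modules --- does follow from the paper's toolkit: for generic $L'$ the vanishing triangle gives a short exact sequence of perverse sheaves $0\to j_{L'}^*[-1]\Qdot\to\psi_{L'}[-1]\Qdot\to\phi_{L'}[-1]\Qdot\to 0$, the Morse-module functors are exact on perverse sheaves (being concentrated in degree $0$ there, by \thmref{pureperverse}), and a submodule of a free Abelian group is free Abelian.

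The genuine gap is the rank inequality $\operatorname{rank}\widetilde H^n(F_{f,\0})\geq\operatorname{rank}\widetilde H^n(\cL_{X,\0})$, which you correctly single out as the only hard point but do not prove. The paper does not reprove it either: it is quoted directly from Theorem 5.3 of \cite{micromorse}, which (roughly) filters $H^0(\phi_f[-1]\Pdot)_\0$ with graded pieces $\big(m^0_S(\Pdot)\big)^{\lambda_S}$ indexed by strata, the exponents being polar/conormal intersection multiplicities, with the point stratum contributing $\widetilde H^n(\cL_{X,\0})$ with multiplicity at least one. Your second route --- a sign-definite Euler-characteristic comparison upgraded via concentration in degree $n$ --- is essentially the content of that theorem, but you would still have to establish the formula in the required generality, which is the whole difficulty. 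Your first route, as stated, is not L\^e's attaching theorem: \cite{leattach} compares $F_{f,\0}$ with $F_{f_{|_{V(L)}},\0}$, the Milnor fibre of $f$ restricted to a generic hyperplane slice of $X$, not with $\cL_{X,\0}=F_{L_{|_X},\0}$; a cell-attachment comparison of $F_{f,\0}$ with the complex link of $X$ under the hypothesis $s_f=0$ (which is weaker than a stratified isolated critical point, as the paper emphasizes when discussing Siersma and Tib\u ar) is precisely what needs proof, so that route is circular as written.
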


\begin{proof} The condition that $s_f=\dim_{\mbf 0}\Sigma_{\Z}f$ is equivalent to $s_f=\dim_\0\operatorname{supp} \phi_f[-1]\Pdot$. As $\phi_f[-1]\Pdot$ is perverse, this implies that the only possibly non-zero stalk cohomology is in degrees $j$ such that  $-s_f\leq j\leq 0$, i.e., if $j$ is not in this range, then
$$
H^j(\phi_f[-1]\Z^\bullet_X[n+1])_\0 \ \cong  \ H^{j+n}(\phi_f\Z^\bullet_X)_\0  \ \cong  \ \widetilde H^{j+n}(F_{f, \0}) \ =  \ 0.
$$

L\^e's result tells us not just that $\Pdot$ is a perverse, but also that all of the Morse modules are free Abelian. Consequently, the Morse modules of $\phi_f[-1]\Pdot$ are free Abelian. It follows that $\widetilde H^{n-s_f}(F_{f, \mbf 0})$ is free Abelian. Finally, that the rank of $H^n(F_{f, \mbf 0})$ is at least the rank of $\widetilde H^n(\cL_{X,\mbf 0})$ is immediate from Theorem 5.3 of \cite{micromorse}.
\end{proof}

\begin{rem} The first statement above can be found in  Example 6.0.12 of \cite{schurbook}. The second statement also follows from Sch\"urmann's work in Chapter 6 of  \cite{schurbook}.

The result from Theorem 5.3 of \cite{micromorse} is more general and more precise than the statement in \thmref{thm:onedegree}, but, of course, that makes the statement harder to read. 

There are also similar, more refined, results on the level of homotopy-type by Siersma \cite{siersmabouquet} and Tib\u ar \cite{tibarbouq}, under the strictly stronger hypothesis that
$f$ has a {\bf stratified} isolated critical point $\0$. 

\end{rem}

\bigskip

\begin{thm}\label{thm:bignew} $H^k(F_{f,\0}, \partial F_{f,\0})=0$ unless $k=2n$ or $n\leq k\leq n+\operatorname{max}\{s_f, d_f\}$. In addition, $H^k(F_{f,\0}, \partial F_{f,\0})$ is free Abelian when $k=n$ and, if $d_f\neq n$, when $k=2n$.
\end{thm}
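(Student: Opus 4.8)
The plan is to transport everything to costalk cohomology of nearby cycles. Write $\Pdot := \Z^\bullet_X[n+1]$; it is perverse by \thmref{lelink}, so $\psi_f[-1]\Pdot$ is a perverse sheaf on the purely $n$-dimensional set $V(f)$, and the standard costalk identification gives $H^{k}\big(\hat m^!_{\0}\,\psi_f[-1]\Pdot\big)\cong\hyp^{k-1}\big(F_{f,\0},\partial F_{f,\0};\Pdot\big)\cong H^{k+n}\big(F_{f,\0},\partial F_{f,\0}\big)$, i.e. $H^{k}(F_{f,\0},\partial F_{f,\0})\cong H^{k-n}(\hat m^!_{\0}\,\psi_f[-1]\Pdot)$. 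The costalk‑vanishing of a perverse sheaf supported in dimension $n$ already forces this group to be $0$ outside $[n,2n]$ (the previously‑known range), so what remains is: (i) $H^{j}(\hat m^!_{\0}\,\psi_f[-1]\Pdot)=0$ for $\max\{s_f,d_f\}<j<n$; and (ii) this group is free Abelian in costalk degree $0$, and also in costalk degree $n$ provided $d_f\ne n$.

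The first step toward (i) is the base case $M:=\max\{s_f,d_f\}\le 0$. Here $s_f\le 0$, so \thmref{thm:onedegree} gives that $\widetilde H^{*}(F_{f,\0})$ is concentrated in degree $n$ and free Abelian; by universal coefficients the same holds for $\widetilde H_{*}(F_{f,\0})$. And $d_f\le 0$ forces, directly from the definition of $d_f$, that either $\dim_\0\Sigma X=0$ or else $\Sigma X\subseteq V(f)$ near $\0$; in both cases a nearby fibre $f^{-1}(\eta)$ ($\eta\ne 0$) is disjoint from $\Sigma X$ near $\0$, so $F_{f,\0}$ lies in the smooth locus of $X$ and — for generic $\epsilon$, $\eta$ — is a compact smooth $2n$-manifold with boundary $\partial F_{f,\0}$. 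Poincar\'e--Lefschetz duality $H^{k}(F_{f,\0},\partial F_{f,\0})\cong H_{2n-k}(F_{f,\0})$ then yields exactly "$=0$ unless $k\in\{n,2n\}$, free when nonzero", settling the theorem in this case (note $d_f\le 0\ne n$).

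For $M\ge 1$ I would slice. Fix a sufficiently generic affine linear subspace of codimension $M$ through $\0$; let $H^M$ be its intersection with $X$ — a purely $(n+1-M)$-dimensional local complete intersection — and $\tilde f:=f_{|_{H^M}}$. Since $\Pdot$ has free Abelian Morse modules (\thmref{lelink}), the general form of \thmref{thm:sliceattach}, together with $\Pdot_{|_{H^M}}[-M]=\Z^\bullet_{X\cap H^M}[(n-M)+1]$, gives $H^{k+M}\big(\hat m^!_{\0}\,\psi_f[-1]\Pdot\big)\cong H^{k}\big(\check m^!_{\0}\,\psi_{\tilde f}[-1]\,\Z^\bullet_{X\cap H^M}[(n-M)+1]\big)$ for all $k\ge 1$. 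The key point is that a sufficiently generic hyperplane section drops each invariant by one: for $s_f$ this is the Proposition preceding the discussion of the variation map, and for $d_f$ it follows from $\Sigma(X\cap H)=\Sigma X\cap H$ and $\overline{\Sigma(X\cap H)\setminus V(\tilde f)}=\overline{\Sigma X\setminus V(f)}\cap H$ for generic $H$. Hence $\max\{s_{\tilde f},d_{\tilde f}\}\le 0$ on $X\cap H^M$, so the base case applies to $\tilde f$, and the right‑hand side, which equals $H^{k+(n-M)}(F_{\tilde f,\0},\partial F_{\tilde f,\0})$, vanishes for $k\ge 1$ except at $k=n-M$. Therefore $H^{j}(\hat m^!_{\0}\,\psi_f[-1]\Pdot)=0$ for $j\ge M+1$, $j\ne n$; with the perversity bound this is (i).

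Finally, for (ii) when $M\ge 1$: freeness in costalk degree $0$ is immediate from the exact sequence at the end of \thmref{thm:sliceattach}, which embeds $H^{0}(\hat m^!_{\0}\,\psi_f[-1]\Pdot)$ into $\Z^{\tau}$. For freeness of $H^{2n}(F_{f,\0},\partial F_{f,\0})$ when $d_f\ne n$: the singular locus of the open Milnor fibre $F^{\circ}_{f,\0}$ is contained in $\Sigma X\cap f^{-1}(\eta)$, hence has complex dimension $\le d_f-1\le n-2$; so $H^{\mathrm{BM}}_{2n-1}(F^{\circ}_{f,\0})\cong H^{\mathrm{BM}}_{2n-1}\big((F^{\circ}_{f,\0})_{\mathrm{reg}}\big)\cong H^{1}\big((F^{\circ}_{f,\0})_{\mathrm{reg}}\big)$ is torsion‑free, and the universal‑coefficient/duality sequence for $H^{2n}(F_{f,\0},\partial F_{f,\0})\cong H^{2n}_c(F^{\circ}_{f,\0})$ then forces this group to be free. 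The step I expect to be the main obstacle is the bookkeeping in the slicing argument — pinning down exactly how $s_f$ and $d_f$ behave under a generic hyperplane section, including the degenerate conventions ($d_f:=0$ when $\dim_\0\Sigma X=0$, and the cases where either invariant is $-\infty$), and checking that one sufficiently generic codimension‑$M$ slice simultaneously realizes the isomorphism of \thmref{thm:sliceattach} and the whole chain of dimension drops; the base case in turn rests on the small but essential observation that $d_f\le 0$ makes $\Sigma X$ miss the nearby fibre, which is what puts us in the smooth Poincar\'e--Lefschetz setting.
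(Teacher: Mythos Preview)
Your proof is correct and follows the same route as the paper's: reduce to costalk cohomology of $\psi_f[-1]\Pdot$, handle the smooth-fibre base case by Poincar\'e--Lefschetz duality together with \thmref{thm:onedegree}, and descend to it by generic linear slicing via \thmref{thm:sliceattach}. The only differences are cosmetic --- you slice by codimension $\max\{s_f,d_f\}$ whereas the paper slices by codimension $d_f$ (keeping $s_g=\max\{0,s_f-d_f\}$ possibly positive in the base case), and you verify freeness at $k=2n$ by a direct Borel--Moore/Poincar\'e argument on the open fibre rather than by reading it off, as the paper does, from the slicing isomorphism and the $d_f=0$ case.
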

\begin{proof} Using notation from the previous section, we need to look at 
$$H^j(\hat m_\0^!\psi_f[-1]\Pdot) \ \cong \ H^{j+n}(F_{f,\0}, \partial F_{f,\0}),
$$
and show that it is $0$, unless $j=n$ or $0\leq j\leq \operatorname{max}\{s_f, d_f\}$. As $\psi_f[-1]\Pdot$ is supported on a set of dimension at most $n$, $H^j(\hat m_\0^!\psi_f[-1]\Pdot)$ unless $0\leq j\leq n$. Thus, what we need to show is that $H^j(\hat m_\0^!\psi_f[-1]\Pdot)=0$ if $\operatorname{max}\{s_f, d_f\}+1\leq j\leq n-1$.

\medskip

\noindent $\bullet$ $d_f=0$ case:

\smallskip

If $d_f=0$, then $F_{f,\0}$ is a smooth manifold with boundary, and Poincar\'e-Lefschetz duality tells us that $H^k(F_{f,\0}, \partial F_{f,\0})$ is isomorphic to $H_{2n-k}(F_{f,\0})$. The result now follows from \thmref{thm:onedegree} and the Universal Coefficient Theorem.

\medskip

\noindent $\bullet$ $d_f$ arbitrary case:

\smallskip

Let $H^{d_f}$ be the intersection of $X$ with a generic $d_f$-codimensional linear subspace in $\C^N$. Then, $H^{d_f}$ is a purely $(n+1-d_f)$-dimensional local complete intersection, and $g:=f_{|_{H^{d_f}}}$ is a function such that $s_g= \max\{0, s-d_f\}$ and $d_g= 0$. 

By the previous case, we have that $H^j(\hat m_\0^!\psi_g[-1]\big(\Pdot_{|_{H^{d_f}}}[-d_f]\big))=0$ if $\max\{0, s_f-d_f\}+1\leq j\leq n-d_f-1$. That $H^j(\hat m_\0^!\psi_f[-1]\Pdot)=0$ if $\operatorname{max}\{s_f, d_f\}+1\leq j\leq n-1$ now follows immediately from \thmref{thm:sliceattach}. That $H^n(F_{f,\0}, \partial F_{f,\0})$ is free Abelian also follows from \thmref{thm:sliceattach}, since $H^{0}\left(\hat m^!_{\mbf x}\psi_f[-1]\Pdot\right)$ injects into $\Z^\tau$. That $H^{2n}(F_{f,\0}, \partial F_{f,\0})$ is free Abelian also follows immediately from the $d_f=0$ case and the isomorphisms in \thmref{thm:sliceattach}, provided $n-d_f\geq 1$.
\end{proof}

\smallskip

If $L$ is the restriction of a generic linear form to $X$, then $s_L\leq 0$, $d_L=d$, and $F_{L,\0}$ is the complex link $\cL_{X, \0}$. Applying the theorem above, we immediately obtain:

\begin{cor}\label{cor:linkzero} The relative cohomology $H^k(\cL_{X, \0}, \partial \cL_{X, \0})=0$ unless $k=2n$ or $n\leq k\leq n+d$. In addition, $H^k(\cL_{X, \0}, \partial \cL_{X, \0})$ is free Abelian when $k=n$ and, if $d\neq n$, when $k=2n$.
\end{cor}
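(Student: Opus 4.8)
The plan is to deduce this from \thmref{thm:bignew} by taking $f$ to be $L$, the restriction to $X$ of a generic affine linear form with $L(\0)=0$. Three facts make the substitution legitimate. First, by the very definition of the complex link and its boundary, $F_{L,\0}=\cL_{X,\0}$ and $\partial F_{L,\0}=\partial\cL_{X,\0}$, so the relative cohomology groups in the corollary are exactly those appearing in \thmref{thm:bignew} applied to $L$. Second, for a generic linear form the $\Z$-critical locus of $L$ is at most $0$-dimensional at $\0$: this is the special case $\Pdot=\Z^\bullet_X[n+1]$ of the fact that $\dim_\0\Sigma_{\Pdot}L=0$ for generic $L$, so $s_L\leq 0$. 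Third, $d_L=d$, because intersecting $\Sigma X$ with a generic hyperplane $V(L)$ does not lower its dimension at $\0$ (and when $d=0$ this holds by the convention $d_L:=0$).

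Granting these, $\max\{s_L,d_L\}=\max\{s_L,d\}=d$ since $s_L\leq 0\leq d$. Feeding this into \thmref{thm:bignew} gives $H^k(\cL_{X,\0},\partial\cL_{X,\0})=H^k(F_{L,\0},\partial F_{L,\0})=0$ unless $k=2n$ or $n\leq k\leq n+d$, which is the first claim. The freeness statements transfer verbatim: \thmref{thm:bignew} asserts that $H^k(F_{f,\0},\partial F_{f,\0})$ is free Abelian for $k=n$ and, when $d_f\neq n$, for $k=2n$; specializing $f=L$, and hence $d_f=d_L=d$, gives precisely the stated conclusion.

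There is essentially no obstacle here: all the substance sits in \thmref{thm:bignew} and the structural facts about generic linear sections recalled in the previous section. The only point deserving a moment's care is the identity $d_L=d$, which rests on the standard observation that a generic hyperplane meets each positive-dimensional component of $\Sigma X$ through $\0$ properly rather than containing it, so that $\overline{\Sigma X\backslash V(L)}$ still has dimension $d$ at $\0$; no new argument is needed for this.
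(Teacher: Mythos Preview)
Your proposal is correct and follows exactly the same route as the paper: the paper derives the corollary in one line by specializing \thmref{thm:bignew} to $f=L$, a generic linear form, noting that $s_L\le 0$, $d_L=d$, and $F_{L,\0}=\cL_{X,\0}$. One small remark: your sentence ``intersecting $\Sigma X$ with a generic hyperplane $V(L)$ does not lower its dimension at $\0$'' is not the right justification for $d_L=d$ (that intersection \emph{does} drop dimension); the correct reason is the one you give in your final paragraph, namely that a generic $V(L)$ contains no component of $\Sigma X$, so $\overline{\Sigma X\setminus V(L)}=\Sigma X$ near $\0$.
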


\bigskip

Taking the stalk cohomology of the Wang triangle and the costalk cohomology of the dual Wang triangle (see the previous section), both at the origin, and combining this with \thmref{thm:onedegree} and \thmref{thm:bignew},  we immediately obtain:

\begin{thm}\label{thm:wangs} There are long exact sequences, the Wang sequence and dual Wang sequence:
$$
\cdots\rightarrow H^{k-1}(F_{f,\0})\rightarrow H^k(K_{X,\0}\backslash K_{V(f),\0})\rightarrow  H^{k}(F_{f,\0})\xrightarrow{\operatorname{id}-T_{f, \0}}H^{k}(F_{f,\0})  \rightarrow H^{k+1}(K_{X,\0}\backslash K_{V(f),\0})\rightarrow\cdots
$$
and

$
\cdots\rightarrow H^{k-1}(F_{f,\0}, \partial F_{f,\0})\rightarrow H^k(K_{X,\0}, K_{V(f),\0})\rightarrow  H^{k}(F_{f,\0}, \partial F_{f,\0})\xrightarrow{\operatorname{id}-T_{f, \0}} \hfill
$

$
\hfill H^{k}(F_{f,\0}, \partial F_{f,\0})  \rightarrow H^{k+1}(K_{X,\0}, K_{V(f),\0})\rightarrow\cdots$.
\end{thm}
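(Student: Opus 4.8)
The two exact sequences are simply the long exact hypercohomology sequences of the Wang triangle and the dual Wang triangle of \S3, evaluated at the origin with coefficients in $\Z^\bullet_X$ (equivalently in $\Pdot=\Z^\bullet_X[n+1]$, which only re-indexes everything by a fixed shift), so the plan has three moving parts: write down the long exact sequences, identify the nearby-cycle terms with the cohomology of the Milnor fiber and of the Milnor fiber modulo its boundary, and identify the two remaining ``error terms'' with the real-link objects $K_{X,\0}\setminus K_{V(f),\0}$ and $(K_{X,\0},K_{V(f),\0})$. For the first sequence I would apply $\hat m_\0^*$ to
$$
\rightarrow j_f^*[-1]{i_f}_*i_f^*\Z^\bullet_X\rightarrow\psi_f[-1]\Z^\bullet_X\xrightarrow{\operatorname{id}-T_f}\psi_f[-1]\Z^\bullet_X\arrow{[1]}
$$
and pass to hypercohomology. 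The two copies of $H^k(\hat m_\0^*\psi_f[-1]\Z^\bullet_X)$ become $H^{k-1}(F_{f,\0})$ by the stalk formula for nearby cycles recalled in \S3, and the connecting map between them is $\operatorname{id}-T_{f,\0}$, since $\hat m_\0^*T_f$ induces the usual monodromy on $\hyp^*(F_{f,\0};\Z^\bullet_X)$, also recorded in \S3.

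It remains to identify the third term. As $i_f$ is an open inclusion, $i_f^*\Z^\bullet_X=\Z^\bullet_{X\setminus V(f)}$, and $H^k(\hat m_\0^* j_f^*[-1]{i_f}_*i_f^*\Z^\bullet_X)$ is $H^{k-1}$ of the stalk at $\0$ of ${i_f}_*\Z^\bullet_{X\setminus V(f)}$, which by open base change computes $H^{k-1}\big(B_\epsilon(\0)\cap(X\setminus V(f))\big)$. To turn this into the real-link complement I would invoke the local conical structure of $X$ with respect to a Whitney stratification in which $V(f)$ is a union of strata: this yields a homeomorphism of triples $\big(B_\epsilon(\0)\cap X,\ B_\epsilon(\0)\cap V(f),\ \{\0\}\big)\cong\big(\operatorname{cone}K_{X,\0},\ \operatorname{cone}K_{V(f),\0},\ \text{apex}\big)$, and deleting the apex and taking complements shows $B_\epsilon(\0)\cap(X\setminus V(f))\cong\big(K_{X,\0}\setminus K_{V(f),\0}\big)\times(0,1]$, which deformation retracts onto $K_{X,\0}\setminus K_{V(f),\0}$. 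Splicing the long exact sequence and relabelling then produces exactly the stated Wang sequence.

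For the dual Wang sequence I would run the mirror-image argument, applying $\hat m_\0^!$ to
$$
\rightarrow\psi_f[-1]\Z^\bullet_X\xrightarrow{\operatorname{id}-T_f}\psi_f[-1]\Z^\bullet_X\rightarrow j_f^![1]{i_f}_!i_f^!\Z^\bullet_X\arrow{[1]}.
$$
The costalk formula of \S3 identifies the two middle terms with $H^{k-1}(F_{f,\0},\partial F_{f,\0})$, again with connecting map $\operatorname{id}-T_{f,\0}$. For the third term, $i_f$ open gives $i_f^!\Z^\bullet_X=i_f^*\Z^\bullet_X=\Z^\bullet_{X\setminus V(f)}$, so ${i_f}_!i_f^!\Z^\bullet_X$ is extension by zero; using $\hat m_\0^! j_f^!=m_\0^!$, the costalk description $H^j(m_\0^!(\cdot))\cong\hyp^j\big(B_\epsilon(\0)\cap X,\ S_\epsilon(\0)\cap X;\ \cdot\big)$, the fact that the hypercohomology of an open extension-by-zero computes the cohomology of the total space relative to its closed complement, and the contractibility of both $B_\epsilon(\0)\cap X$ and $B_\epsilon(\0)\cap V(f)$, one finds that this term is $H^k(K_{X,\0},K_{V(f),\0})$; splicing yields the dual Wang sequence. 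Feeding the vanishing ranges of \thmref{thm:onedegree} and \thmref{thm:bignew} into the two sequences then reads off the asserted vanishing of $H^k(K_{X,\0}\setminus K_{V(f),\0})$ and of $H^k(K_{X,\0},K_{V(f),\0})$ outside the stated ranges.

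Nothing here is genuinely deep: the two triangles, the stalk and costalk formulas, and the behavior of $\operatorname{id}-T_f$ on (co)stalk cohomology are all quoted from \S3, and the argument uses no perversity at all. The one step carrying real content is the translation of the two error terms $\hat m_\0^* j_f^*[-1]{i_f}_*i_f^*\Z^\bullet_X$ and $\hat m_\0^! j_f^![1]{i_f}_!i_f^!\Z^\bullet_X$ into $K_{X,\0}\setminus K_{V(f),\0}$ and $(K_{X,\0},K_{V(f),\0})$ via the conical structure; and the only place an error could slip in is the shift bookkeeping — keeping the internal shifts $[-1]$ and $[1]$, the conventions built into the stalk/costalk isomorphisms, and (if one works with $\Pdot$ rather than $\Z^\bullet_X$) the extra $[n+1]$ consistent, so that the indices of the two displayed sequences come out precisely as written.
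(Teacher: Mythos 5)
Your proposal is correct and is exactly the argument the paper intends: the paper's entire ``proof'' is the one sentence preceding the theorem, saying to take the stalk cohomology of the Wang triangle and the costalk cohomology of the dual Wang triangle at the origin, which is precisely what you do, and your identifications of the third terms with $H^{*}(K_{X,\0}\setminus K_{V(f),\0})$ and $H^{*}(K_{X,\0},K_{V(f),\0})$ (via the conical structure and the contractibility of $B_\epsilon(\0)\cap X$ and $B_\epsilon(\0)\cap V(f)$), as well as the degree bookkeeping, all check out. Nothing further is needed.
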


\bigskip

\begin{rem} Note that, if $X$ itself has a non-isolated singularity at $\0$, then $K_{X,\0}$ and $F_{f,\0}$ are not manifolds, and the second long exact sequence above is {\bf not} obtained via Poincar\'e-Lefschetz duality from the first one.

\end{rem}

From \thmref{thm:wangs} and \thmref{thm:bignew}, we immediately conclude:

\begin{cor} The integral cohomology $H^k(K_{X,\0}\backslash K_{V(f),\0})=0$ except, possibly, when $k=0, 1$ or $n-s_f\leq k\leq n+1$. 

The integral relative cohomology $H^k(K_{X,\0}, K_{V(f),\0})=0$ except, possibly, when $k=2n, 2n+1$ or $n\leq k\leq n+ \operatorname{max}\{s_f, d_f\} +1$.
\end{cor}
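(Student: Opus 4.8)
The plan is to deduce both assertions formally from the two long exact sequences in \thmref{thm:wangs}, feeding in the vanishing ranges established in \thmref{thm:onedegree} and \thmref{thm:bignew}. First I would isolate from the Wang sequence the exact piece
$$
H^{k-1}(F_{f,\0})\xrightarrow{\operatorname{id}-T_{f,\0}}H^{k-1}(F_{f,\0})\rightarrow H^{k}(K_{X,\0}\backslash K_{V(f),\0})\rightarrow H^{k}(F_{f,\0})\xrightarrow{\operatorname{id}-T_{f,\0}}H^{k}(F_{f,\0}),
$$
which exhibits $H^{k}(K_{X,\0}\backslash K_{V(f),\0})$ as an extension of $\ker(\operatorname{id}-T_{f,\0}\text{ on }H^k(F_{f,\0}))$ by $\operatorname{coker}(\operatorname{id}-T_{f,\0}\text{ on }H^{k-1}(F_{f,\0}))$; in particular it vanishes whenever $H^{k-1}(F_{f,\0})=0=H^{k}(F_{f,\0})$. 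The dual Wang sequence gives the parallel statement: $H^{k}(K_{X,\0},K_{V(f),\0})$ vanishes whenever $H^{k-1}(F_{f,\0},\partial F_{f,\0})=0=H^{k}(F_{f,\0},\partial F_{f,\0})$. Note that only the groups, not the monodromy maps, are needed for this.

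Next I would simply read off the ranges. For the first assertion, \thmref{thm:onedegree} gives $\widetilde H^{j}(F_{f,\0})=0$ for $j\notin[n-s_f,n]$; since $H^{j}(F_{f,\0})=\widetilde H^{j}(F_{f,\0})$ for $j\geq 1$ while $H^{0}(F_{f,\0})$ may be nonzero, the group $H^{j}(F_{f,\0})$ can be nonzero only for $j=0$ or $j\in[n-s_f,n]$. Hence, by the exact sequence above, $H^{k}(K_{X,\0}\backslash K_{V(f),\0})$ can be nonzero only when $k$ or $k-1$ lies in $\{0\}\cup[n-s_f,n]$, i.e.\ only when $k\in\{0,1\}$ or $n-s_f\leq k\leq n+1$. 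For the second assertion, \thmref{thm:bignew} gives $H^{j}(F_{f,\0},\partial F_{f,\0})=0$ unless $j=2n$ or $n\leq j\leq n+\max\{s_f,d_f\}$, so $H^{k}(K_{X,\0},K_{V(f),\0})$ can be nonzero only when $k$ or $k-1$ lies in $\{2n\}\cup[n,n+\max\{s_f,d_f\}]$, i.e.\ only when $k\in\{2n,2n+1\}$ or $n\leq k\leq n+\max\{s_f,d_f\}+1$.

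I do not expect a genuine obstacle: all the substance is already in \thmref{thm:onedegree} and \thmref{thm:bignew} (the latter resting on the slicing argument of \thmref{thm:sliceattach}), and this corollary is pure bookkeeping. The only steps needing a moment's care are the off-by-one shift between the $H^{k-1}$ and $H^{k}$ columns of the Wang sequences — which is exactly what produces the top endpoints $n+1$ and $2n+1$ while leaving the left endpoint $n-s_f$ unchanged — and the degree-zero discrepancy between reduced and unreduced cohomology of the Milnor fiber, which is what keeps $k=0,1$ in the exceptional set for $K_{X,\0}\backslash K_{V(f),\0}$ whereas (for $n\geq 1$) $H^j(F_{f,\0},\partial F_{f,\0})$ already vanishes for $j<n$, so the relative statement starts at $k=n$.
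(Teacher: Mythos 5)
Your proposal is correct and is exactly the paper's argument: the paper derives this corollary "immediately" from the Wang and dual Wang sequences of Theorem~\ref{thm:wangs} together with the vanishing ranges of Theorems~\ref{thm:onedegree} and \ref{thm:bignew}, which is precisely the bookkeeping you carry out (including the correct handling of the unreduced $H^0(F_{f,\0})$ and the degree shift that produces the top endpoints $n+1$ and $2n+1$). No gaps.
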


By taking $f$ to be the restriction of a generic linear form, we obtain:

\begin{cor} Let $H$ denote e generic hyperplane in the ambient affine space. Then, the integral cohomology $H^k(K_{X,\0}\backslash K_{X\cap H,\0})=0$ except, possibly, when $k=0, 1$ or $n\leq k\leq n+1$. 

The integral relative cohomology $H^k(K_{X,\0}, K_{X\cap H,\0})=0$ except, possibly, when $k=2n, 2n+1$ or $n\leq k\leq n+ d+1$.
\end{cor}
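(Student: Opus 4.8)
The plan is to apply the preceding corollary in the single case $f = L$, where $L$ is the restriction to $X$ of a generic affine linear form with $L(\0) = 0$, and then to translate each object back into the language of the statement. (Note that such an $L$ automatically satisfies the running hypothesis of this section, since a generic hyperplane through $\0$ contains no irreducible component of $X$ through $\0$, so $L$ does not vanish identically on any such component.)

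First I would set up the dictionary. Writing $H := V(L)$, the definitions in \secref{sec:classic} give $F_{L,\0} = \cL_{X,\0}$, and $\partial F_{L,\0} = \partial\cL_{X,\0}$ is homeomorphic to $K_{X\cap H,\0}$; moreover $V(f) = V(L) = X\cap H$, so $K_{V(f),\0}$ is precisely $K_{X\cap H,\0}$. Thus the preceding corollary, read with $f = L$, is already a statement about $H^k(K_{X,\0}\backslash K_{X\cap H,\0})$ and $H^k(K_{X,\0}, K_{X\cap H,\0})$; what remains is to evaluate $s_f$ and $\operatorname{max}\{s_f, d_f\}$ when $f = L$.

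Next I would invoke the two facts about a generic linear form recorded just before \corref{cor:linkzero}: $s_L \leq 0$ (allowing $s_L = -\infty$ when $\0\notin\Sigma_\Z L$) and $d_L = d$. For the non-relative statement, the exceptional range $n - s_f \leq k \leq n+1$ becomes $n - s_L \leq k \leq n+1$; since $s_L \leq 0$ gives $n - s_L \geq n$, this range lies inside $\{n, n+1\}$ (and is empty once $s_L < 0$), yielding the claimed vanishing outside $k = 0, 1$ and $n \leq k \leq n+1$. For the relative statement, $\operatorname{max}\{s_f, d_f\}$ becomes $\operatorname{max}\{s_L, d\}$, and since $d = \dim_\0\Sigma X \geq 0 \geq s_L$ this maximum equals $d$; hence the exceptional range $n \leq k \leq n + \operatorname{max}\{s_f, d_f\} + 1$ is exactly $n \leq k \leq n+d+1$, while the degrees $2n, 2n+1$ carry over unchanged.

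All the ingredients — the dictionary of \secref{sec:classic}, the values $s_L \leq 0$ and $d_L = d$, and the preceding corollary — are already in hand, so there is no genuine obstacle; the only points needing a moment's care are the degenerate cases. When $s_L = -\infty$ the relevant part of the range in the non-relative statement is simply vacuous, and when $d = 0$ one has $d_L = 0$ directly from the definition of $d_f$, so the identity $\operatorname{max}\{s_L, d_L\} = d$ still holds and the conclusion is unaffected.
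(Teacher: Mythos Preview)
Your proposal is correct and follows exactly the paper's approach: the paper's entire argument is the single line ``By taking $f$ to be the restriction of a generic linear form, we obtain,'' and you have simply unpacked that specialization, using the facts $s_L\leq 0$ and $d_L=d$ recorded just before \corref{cor:linkzero} together with the identification $K_{V(L),\0}=K_{X\cap H,\0}$.
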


\bigskip

\begin{thm}
Suppose that $n\geq 1$ and $s_f=0$. Let $\omega$ denote the map from $H^n(F_{f,\0}, \partial F_{f,\0})$ to $H^n(F_{f,\0})$ induced by the inclusion $(F_{f,\0}, \emptyset)\hookrightarrow (F_{f,\0}, \partial F_{f,\0})$.

Then:

\begin{enumerate}

\item There is well-defined variation map 
$$\operatorname{var}:H^n(F_{f,\0})\rightarrow H^n(F_{f,\0}, \partial F_{f,\0})
$$
such that $\operatorname{var}\circ\, \omega =\operatorname{id}-T_f$ on $H^n(F_{f,\0}, \partial F_{f,\0})$ and $\omega\circ \operatorname{var} =\operatorname{id}-T_f$ on $H^n(F_{f,\0})$.

\item
There is an exact sequence
$$
0\rightarrow H^n(K_{X,\0})\rightarrow H^n(F_{f,\0})\arrow{\operatorname{var}} H^n(F_{f,\0}, \partial F_{f,\0})\rightarrow H^{n+1}(K_{X,\0})\rightarrow 0.
$$
In particular, $H^n(K_{X,\0})$ injects into $H^n(F_{f,\0})$, and $\operatorname{var}$
  is an isomorphism if and only if $H^n(K_{X,\0})= H^{n+1}(K_{X,\0})=0$. 
  
\item For all $k\geq n+1$, $H^k(F_{f,\0}, \partial F_{f,\0})\cong H^{k+1}(K_{X,\0})$, and so, for all $k\geq n+1$, $H^{k}(\partial F_{f,\0}) \ \cong \ H^{k+2}(K_{X,\0})$.

\item For all $k\leq n-1$, $\widetilde H^{k}(K_{X,\0})=0$.
  
  \end{enumerate}
\end{thm}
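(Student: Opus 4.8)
The plan is to reduce all four statements to assertions about the perverse sheaf $\Pdot=\Z^\bullet_X[n+1]$ and to prove them by taking cohomology of the (dual) vanishing triangle at the origin. First I record the dictionary I will use. Since $B_\epsilon(\0)\cap X$ is contractible and $B_\epsilon(\0)\cap X\setminus\{\0\}$ deformation retracts onto $K_{X,\0}$, the costalk of $\Z^\bullet_X$ at $\0$ computes $\widetilde H^{*-1}(K_{X,\0})$, so after the shift $H^k(m_\0^!\Pdot)\cong\widetilde H^{n+k}(K_{X,\0})$; likewise $H^k(\hat m_\0^*\psi_f[-1]\Pdot)\cong H^{n+k}(F_{f,\0})$ and $H^k(\hat m_\0^!\psi_f[-1]\Pdot)\cong H^{n+k}(F_{f,\0},\partial F_{f,\0})$ (unreduced). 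Finally, $s_f=0$ says $\0$ is an isolated point of $\supp\phi_f[-1]\Pdot$, so $\hat m_\0^*\phi_f[-1]\Pdot\cong\hat m_\0^!\phi_f[-1]\Pdot$ and both are concentrated in degree $0$. For statement (1): this canonical identification makes the morphism $\gamma_\0$ of the subsection on the variation map available (applied with $\Adot=\Pdot$, $\mbf p=\0$), namely the composite $\hat m_\0^*\psi_f[-1]\Pdot\xrightarrow{\hat m_\0^*(\operatorname{can})}\hat m_\0^*\phi_f[-1]\Pdot\cong\hat m_\0^!\phi_f[-1]\Pdot\xrightarrow{\hat m_\0^!(\operatorname{var})}\hat m_\0^!\psi_f[-1]\Pdot$; its effect on degree-$0$ cohomology is the asserted map $\operatorname{var}\colon H^n(F_{f,\0})\to H^n(F_{f,\0},\partial F_{f,\0})$. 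The natural map $\tau_\0$ induces $\omega$ on degree-$0$ cohomology, and the identities $\tau_\0\circ\gamma_\0=\operatorname{id}-\hat m_\0^*T_f$ and $\gamma_\0\circ\tau_\0=\operatorname{id}-\hat m_\0^!T_f$ recorded there translate immediately into $\omega\circ\operatorname{var}=\operatorname{id}-T_f$ on $H^n(F_{f,\0})$ and $\operatorname{var}\circ\omega=\operatorname{id}-T_f$ on $H^n(F_{f,\0},\partial F_{f,\0})$.

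For statements (2) and (3), apply $\hat m_\0^!$ to the dual vanishing triangle $\phi_f[-1]\Pdot\xrightarrow{\operatorname{var}}\psi_f[-1]\Pdot\xrightarrow{\operatorname{pmoc}}j_f^![1]\Pdot\arrow{[1]}$ and use $\hat m_\0^!j_f^!=m_\0^!$ to get the long exact sequence relating $H^{n+\ell}(F_{f,\0},\partial F_{f,\0})$, $\widetilde H^{n+\ell+1}(K_{X,\0})$ and $H^\ell(\hat m_\0^!\phi_f[-1]\Pdot)$. In $\psi$-degrees $\ell\geq1$ both $\phi$-terms vanish, giving $H^k(F_{f,\0},\partial F_{f,\0})\cong H^{k+1}(K_{X,\0})$ for $k\geq n+1$, the first isomorphism of (3). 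Around $\ell=0$, using that $\hat m_\0^!\phi_f[-1]\Pdot$ is concentrated in degree $0$ and that $H^{n-1}(F_{f,\0},\partial F_{f,\0})=0$ by \thmref{thm:bignew}, the sequence collapses to $0\to H^n(K_{X,\0})\to H^0(\hat m_\0^!\phi_f[-1]\Pdot)\xrightarrow{\hat m_\0^!(\operatorname{var})}H^n(F_{f,\0},\partial F_{f,\0})\to H^{n+1}(K_{X,\0})\to0$. To finish (2) I replace the third term by $H^n(F_{f,\0})$: the stalk form of the vanishing triangle applied to $\Pdot$, together with the fact that the stalk of $\Pdot$ at $\0$ is $\Z$ in degree $-(n+1)$ and $0$ elsewhere, shows that for $n\geq1$ the map $\hat m_\0^*(\operatorname{can})$ is an isomorphism $H^n(F_{f,\0})=H^0(\hat m_\0^*\psi_f[-1]\Pdot)\xrightarrow{\sim}H^0(\hat m_\0^*\phi_f[-1]\Pdot)\cong H^0(\hat m_\0^!\phi_f[-1]\Pdot)$, and composing these with $\hat m_\0^!(\operatorname{var})$ recovers exactly $\gamma_\0$, i.e. the $\operatorname{var}$ of (1). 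This gives the exact sequence of (2), whence the injectivity of $H^n(K_{X,\0})$ into $H^n(F_{f,\0})$ and the criterion for $\operatorname{var}$ to be an isomorphism. For the remaining part of (3), feed the vanishing $H^k(F_{f,\0})=\widetilde H^k(F_{f,\0})=0$ for $k\geq n+1$ (the $s_f=0$ case of \thmref{thm:onedegree}) into the long exact sequence of the pair $(F_{f,\0},\partial F_{f,\0})$ to obtain $H^k(\partial F_{f,\0})\cong H^{k+1}(F_{f,\0},\partial F_{f,\0})\cong H^{k+2}(K_{X,\0})$ for $k\geq n+1$.

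Statement (4) does not involve $f$. From the dictionary, $H^k(m_\0^!\Pdot)\cong\widetilde H^{n+k}(K_{X,\0})$, and since $\Pdot$ is a perverse sheaf with $(n+1)$-dimensional support, its costalk cohomology at $\0$ vanishes outside the range $0\leq k\leq n+1$; hence $\widetilde H^i(K_{X,\0})=0$ for $i<n$ (and, incidentally, also for $i>2n+1$), which is (4).

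The step I expect to be the main obstacle is the compatibility inside the proof of (2): verifying that the arrow $H^0(\hat m_\0^!\phi_f[-1]\Pdot)\to H^n(F_{f,\0},\partial F_{f,\0})$ produced by the long exact sequence of the dual vanishing triangle agrees, after transporting its source to $H^n(F_{f,\0})$ via $\hat m_\0^*(\operatorname{can})$ and the identification $\hat m_\0^*\phi_f[-1]\Pdot\cong\hat m_\0^!\phi_f[-1]\Pdot$, with the variation map of (1). This is a diagram chase through the shifts and the defining composition of $\gamma_\0$; everything else is routine bookkeeping with long exact sequences and the vanishing ranges already supplied by \thmref{thm:onedegree} and \thmref{thm:bignew}.
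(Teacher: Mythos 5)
Your proposal is correct and follows essentially the same route as the paper: apply $\hat m_\0^!$ to the dual vanishing triangle, use that $s_f=0$ concentrates $\hat m_\0^!\phi_f[-1]\Pdot$ in degree $0$ and identifies it with $H^n(F_{f,\0})$ via $\operatorname{can}$, and read off (2)--(4) from the resulting long exact sequence together with the identification $H^k(m_\0^![1]\Pdot)\cong\widetilde H^{k+n+1}(K_{X,\0})$. The only (harmless) divergences are that you obtain (4) directly from the costalk support condition on the perverse sheaf $\Pdot$ rather than from the same long exact sequence, and that you explicitly flag and justify the compatibility of the connecting map with the variation map of (1), which the paper simply asserts.
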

\begin{proof} The existence of the variation map, satisfying Item (1), is immediate from our general categorical discussion in the previous section.

\bigskip

Now, since $s_f=0$ and $n\geq1$, the map $H^n(F_{f,\0})\xrightarrow{\operatorname{can}} \widetilde H^n(F_{f,\0})$ is an isomorphism. In addition, since $s_f=0$, 
$$
\widetilde H^n(F_{f,\0}) \ \cong \ H^0\left(\hat m^*_\0\phi_f[-1]\Pdot\right) \ \cong \ H^0\left(\hat m^!_\0\phi_f[-1]\Pdot\right), 
$$
and $H^k\left(\hat m^!_\0\phi_f[-1]\Pdot\right)=0$ for $k\neq 0$.

If we apply $\hat m_\0^!$ to the dual vanishing triangle, we obtain the distinguished triangle 
$$
\rightarrow \hat m_\0^!\phi_f[-1]\Pdot\xrightarrow{\operatorname{var}}\hat m_\0^!\psi_f[-1]\Pdot\xrightarrow{\operatorname{pmoc}}  m_\0^![1]\Pdot\arrow{[1]}.
$$
Taking the long exact cohomology sequence of this triangle immediately yields Items (2), (3), and (4) after one notes that
$$
H^k(m_\0^![1]\Pdot) \ \cong \ H^{k+n+2}(B_\epsilon(\0)\cap X, S_\epsilon(\0)\cap X) \ \cong \ \widetilde H^{k+n+1}(K_{X, \0}).
$$
\end{proof}

\bigskip

Applying this theorem to the restriction of a generic linear form, we conclude:

\begin{cor}\label{cor:main} Suppose that $n\geq 1$, that $L$ is the restriction of a generic linear form to $X$, and that $H=V(L)$. Then:

\begin{enumerate}

\item There is well-defined variation map 
$$\operatorname{var}:H^n(\cL_{X, \0})\rightarrow H^n(\cL_{X, \0}, \partial \cL_{X, \0})
$$
such that $\operatorname{var}\circ\, \omega =\operatorname{id}-T_L$ on $H^n(\cL_{X, \0}, \partial \cL_{X, \0})$ and $\omega\circ \operatorname{var} =\operatorname{id}-T_L$ on $H^n(\cL_{X, \0})$.

\item
There is an exact sequence
$$
0\rightarrow H^n(K_{X,\0})\rightarrow H^n(\cL_{X, \0})\arrow{\operatorname{var}} H^n(\cL_{X, \0}, \partial \cL_{X, \0})\rightarrow H^{n+1}(K_{X,\0})\rightarrow 0.
$$
In particular, $H^n(K_{X,\0})$ injects into $H^n(\cL_{X, \0})$, and $\operatorname{var}$
  is an isomorphism if and only if $H^n(K_{X,\0})= H^{n+1}(K_{X,\0})=0$. 
  
\item For all $k\geq n+1$, $H^k(\cL_{X, \0}, \partial \cL_{X, \0})\cong H^{k+1}(K_{X,\0})$, and so, for all $k\geq n+1$, 
$$H^k(K_{X\cap H, \0}) \ \cong \  H^k(\partial \cL_{X, \0}) \ \cong \  H^{k+2}(K_{X,\0}).$$
  
  \end{enumerate}
\end{cor}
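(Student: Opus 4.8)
The plan is to obtain this corollary as the specialization of the preceding theorem to $f=L$, the restriction of a generic affine linear form with $L(\0)=0$, followed by a translation of every object into the language of the complex link. First I would verify that the preceding theorem applies with $f=L$. The hypothesis $n\geq 1$ is assumed, and the role of the hypothesis ``$s_f=0$'' is played by the fact that for a generic linear form $\dim_\0\Sigma_\Z L\leq 0$: indeed $\Sigma_\Z L$ lies in the stratified critical locus of $L$, which for generic $L$ meets a neighborhood of $\0$ in at most the point $\0$ itself, exactly the genericity underlying the definition of the complex link, and the same fact was already used in \corref{cor:linkzero}. The argument proving the preceding theorem uses only $\dim_\0\supp\phi_L[-1]\Pdot\leq 0$ — so that $H^0(\hat m^!_\0\phi_L[-1]\Pdot)$ computes $\widetilde H^n(\cL_{X,\0})$ and the higher costalk cohomology of $\phi_L[-1]\Pdot$ vanishes — together with $n\geq 1$; hence it applies verbatim whenever $s_L\leq 0$, including the degenerate case $\widetilde H^*(\cL_{X,\0})=0$.

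Next I would make the identifications recorded in \secref{sec:classic}: by definition $\cL_{X,\0}=F_{L_{|_X},\0}$ and $\partial\cL_{X,\0}=\partial F_{L_{|_X},\0}$, and, since $H=V(L)$, the boundary $\partial\cL_{X,\0}$ is homeomorphic to $S_\epsilon(\0)\cap X\cap H=K_{X\cap H,\0}$. With these substitutions, Items (1) and (2) of the preceding theorem become Items (1) and (2) of the corollary once $F_{L,\0}$ is renamed $\cL_{X,\0}$ and $T_L$ is read as the monodromy of the complex-link fibration; no property of $T_L$ beyond those already packaged in the theorem is needed. For Item (3), the theorem furnishes, for all $k\geq n+1$, both $H^k(F_{L,\0},\partial F_{L,\0})\cong H^{k+1}(K_{X,\0})$ and $H^k(\partial F_{L,\0})\cong H^{k+2}(K_{X,\0})$: the first gives $H^k(\cL_{X,\0},\partial\cL_{X,\0})\cong H^{k+1}(K_{X,\0})$, and the second, combined with the homeomorphism $\partial\cL_{X,\0}\cong K_{X\cap H,\0}$, gives $H^k(K_{X\cap H,\0})\cong H^k(\partial\cL_{X,\0})\cong H^{k+2}(K_{X,\0})$.

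There is no real obstacle here beyond bookkeeping. The one point that warrants an explicit sentence is the genericity input: that a generic linear restriction $L$ has $\dim_\0\Sigma_\Z L\leq 0$ — i.e.\ that $\0$ is at worst an isolated stratified critical point of $L$ — so that the preceding theorem is genuinely in force, together with checking that the homeomorphism used to replace $\partial\cL_{X,\0}$ by $K_{X\cap H,\0}$ is precisely the one recorded in the definition of the complex link. Everything else is a direct transcription of the preceding theorem.
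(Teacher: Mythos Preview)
Your proposal is correct and is exactly the paper's approach: the paper simply says ``Applying this theorem to the restriction of a generic linear form, we conclude'' and states the corollary. You have merely spelled out the bookkeeping more carefully than the paper does, including the observation that the preceding theorem's proof only needs $\dim_\0\supp\phi_L[-1]\Pdot\leq 0$ (so the case $s_L=-\infty$ causes no trouble) and the explicit identifications $F_{L,\0}=\cL_{X,\0}$, $\partial\cL_{X,\0}\cong K_{X\cap H,\0}$ from \secref{sec:classic}.
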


\medskip

\begin{rem} That $H^n(K_{X,\0})$ injects into $H^n(\cL_{X, \0})$, regardless of the dimension of the singular set of $X$ generalizes part of the result given by Dimca in Proposition 6.1.22 of \cite{dimcasheaves}, in which $d$ is required to be $0$.
\end{rem}

Combining Item (3) of the previous corollary with \corref{cor:linkzero}, we obtain:

\begin{cor} The integral cohomology $H^{k}(K_{X,\0})=0$, unless $k=0$, $k=2n+1$, or $n\leq k\leq n+d+1$.
\end{cor}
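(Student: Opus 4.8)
The plan is to split the degrees $k$ into three ranges and dispose of each separately: the band $n\leq k\leq n+d+1$, where the corollary asserts nothing so there is nothing to prove; the high range $k\geq n+2$, which I would settle using the isomorphism in Item~(3) of \corref{cor:main} together with \corref{cor:linkzero}; and the low range $k\leq n-1$, which is already covered by Hamm's theorem. No new geometry is needed beyond these two corollaries. (Since Item~(3) of \corref{cor:main} is stated for $n\geq 1$, I treat $n=0$ as a trivial separate case at the end.)

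For the high range, write $k=j+1$ with $j\geq n+1$ and apply Item~(3) of \corref{cor:main} to obtain
$$
H^{k}(K_{X,\0}) \ \cong \ H^{j}(\cL_{X,\0},\partial\cL_{X,\0}).
$$
By \corref{cor:linkzero} the right-hand side vanishes unless $j=2n$ or $n\leq j\leq n+d$, i.e.\ unless $k=2n+1$ or $n+1\leq k\leq n+d+1$; intersecting with the standing constraint $k\geq n+2$ gives $H^k(K_{X,\0})=0$ for $k\geq n+2$ except possibly when $k=2n+1$ or $n+2\leq k\leq n+d+1$. Since $\Sigma X$ is a proper analytic subset of the purely $(n+1)$-dimensional space $X$, we have $d\leq n$, hence $n+d+1\leq 2n+1$, so the exceptional degrees produced here already appear in the list claimed by the corollary.

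For the low range $k\leq n-1$ I would invoke Hamm's result \cite{hamm}, quoted in the Introduction, that $K_{X,\0}$ is $(n-1)$-connected; this gives $\widetilde H^k(K_{X,\0})=0$ for $k\leq n-1$, hence $H^k(K_{X,\0})=0$ for $1\leq k\leq n-1$, while $k=0$ is one of the permitted degrees. (Alternatively, this can be read off from the perversity of $\Pdot=\Z^\bullet_X[n+1]$: the costalk bound yields $\widetilde H^k(K_{X,\0})\cong H^{k-n}\big(m^!_\0\Pdot\big)$, which vanishes outside $n\leq k\leq 2n+1$, simultaneously handling both the low and the very high degrees.) Combining the three ranges leaves precisely the degrees $k=0$, $k=2n+1$, and $n\leq k\leq n+d+1$.

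The argument is bookkeeping rather than substance, so I do not expect a genuine obstacle; the only points requiring care are getting the degree shift in Item~(3) of \corref{cor:main} exactly right and using the inequality $d\leq n$, which together ensure that the two exceptional ranges $\{2n+1\}$ and $[n,n+d+1]$ meet without leaving a gap. Finally, the case $n=0$ is trivial, since then $K_{X,\0}$ has real dimension one and the asserted vanishing is vacuous.
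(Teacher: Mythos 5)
Your proof is correct and matches the paper's own argument: the high degrees come from combining Item (3) of \corref{cor:main} with \corref{cor:linkzero}, and the low degrees from the $(n-1)$-connectivity of $K_{X,\0}$ (equivalently, Item (4) of the preceding theorem / the costalk bound for the perverse sheaf $\Z^\bullet_X[n+1]$). The only additions are your explicit bookkeeping with $d\leq n$ and the trivial $n=0$ case, which the paper leaves implicit.
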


\bigskip

\begin{exm}
Of course, it is natural to ask for an example where $d>0$, $n+d+1\neq 2n+1$ (i.e., $d\neq n$), and $H^{n+d+1}(K_{X,\0})\neq 0$.

In fact, it is easy to produce such examples. Let $f:\C^5\rightarrow\C$ be given by $f(t,w,x,y,z)=w^2+x^2+y^2+z^2$ and let $X=V(f)$, i.e., $X$ is the product of an ordinary quadratic singularity in $\C^4$ with a complex line. Here, $n=3$ and $d=1$.

Then, by Item (3) of \corref{cor:main}, and since $\cL_{X, \0}$ is contractible,
$$
H^{n+d+1}(K_{X,\0}) \ \cong \ H^{5}(K_{X,\0})  \ \cong \ H^4(\cL_{X, \0}, \partial \cL_{X, \0}) \ \cong  \ H^3( \partial \cL_{X, \0}) \ \cong  \ H^3(K_{X\cap V(t),\0}).
$$

But $K_{X\cap V(t), \0}$ is the real link of the ordinary quadratic singularity $V(w^2+x^2+y^2+z^2)$ in $\C^4$. It is well-known that the monodromy on $H_3$ of the corresponding Milnor fiber is the identity (since the Lefschetz number of the monodromy must be zero). Thus, using the Wang sequence on homology, we know that
$$
\Z \ \cong \ H_3(S^7_\epsilon\backslash K_{X\cap V(t),\0}) \ \cong \ H^4(S^7_\epsilon, K_{X\cap V(t),\0}) \ \cong \ H^3(K_{X\cap V(t),\0}) \ \cong \ H^{5}(K_{X,\0}).
$$
\end{exm}
\newpage
\bibliographystyle{plain}
\bibliography{Masseybib}
\end{document}